\documentclass[hidelinks,onefignum,onetabnum]{siamart251216}
\usepackage{amsfonts,amssymb}
\usepackage{graphicx}
\usepackage{booktabs}
\graphicspath{{./}}

\newsiamthm{thm}{Theorem}
\newsiamthm{lem}{Lemma}
\newsiamthm{prop}{Proposition}
\newsiamremark{rmk}{Remark}
\headers{A PDE-theory-guided neural operator}{A. Moustafa, J. Wu, and X. Zheng}

\title{A PDE-theory-guided neural operator and the long-time relaxation of the
two-dimensional anisotropic Boussinesq system\thanks{Submitted to the editors DATE.
\funding{J. Wu was partially supported by the National Science Foundation (NSF) under
grants DMS-2104682 and DMS-2309748. X. Zheng was partially supported by the NSF
under grant DMS-2309747.}}}

\author{Akram Moustafa\thanks{Department of Computer Science, Central Michigan University, Mount Pleasant, MI 48858
  (\email{moust1am@cmich.edu}).}
\and Jiahong Wu\thanks{Department of Mathematics, University of Notre Dame, Notre Dame, IN 46556
  (\email{jwu29@nd.edu}).}
\and Xiaoming Zheng\thanks{Department of Mathematics, Central Michigan University, Mount Pleasant, MI 48858
  (\email{zheng1x@cmich.edu}).}}

\ifpdf
\hypersetup{
  pdftitle={A PDE-theory-guided neural operator and the long-time relaxation of the two-dimensional anisotropic Boussinesq system},
  pdfauthor={A. Moustafa, J. Wu, and X. Zheng}
}
\fi

\begin{document}

\maketitle

\begin{abstract}
We design a neural operator for the two-dimensional Boussinesq system with
anisotropic dissipation on the periodic box, guided by a rigorous analysis of its
long-time behavior. The operator maps a small perturbation of hydrostatic
equilibrium, of amplitude $\varepsilon$, to the vorticity at a late time. We
prove that the flow relaxes to a stratified shear flow, with the horizontally
averaged vorticity staying within $O(\varepsilon^2)$ of its initial value
$\bar\omega_0$ and the oscillation about it of order $\varepsilon$. Accordingly,
the network outputs $\bar\omega_0$ plus these two parts, each scaled to its
size, which grows as $\varepsilon^2$ and $\varepsilon$, respectively. Without this guidance the training either does not
converge or misses the $\varepsilon^2$ growth. With it, one network reproduces,
sample by sample on $2\times10^3$ test runs over a factor of $23$ in amplitude,
the quantity bounded by the theorem to a median error of $2.2$ percent. Being
fast and differentiable with respect to the initial data, the network is also used to search the space of initial
data, and with every candidate checked by the solver the search finds initial
data for which this quantity is $4.4$ times the largest value among the test
runs, raising the lower bound for the constant of the theorem by that factor.
\end{abstract}

\begin{keywords}
PDE-theory-guided neural operator, Fourier neural operator, scientific machine learning, anisotropic Boussinesq system, long-time behavior, stratified shear flow
\end{keywords}

\begin{MSCcodes}
65M70, 68T07, 76D05, 35B40, 76U60
\end{MSCcodes}

\section{Introduction}
\label{sec_intro}

Neural operators approximate the solution operator of a partial
differential equation, that is, the map from its input functions, such as initial and boundary data, coefficients and forcing, and its physical parameters to its solution
\cite{Kovachki2022}. Widely used examples are the Fourier neural operator (FNO)
\cite{Li2021FNO} and the deep operator network (DeepONet) \cite{Lu2021DeepONet},
and both can approximate any continuous operator on a compact set of inputs
\cite{KovachkiLanthalerMishra2021,Lanthaler2022}. Once trained on solutions computed by a conventional solver, a neural operator is
evaluated orders of magnitude faster than the solver, which makes it useful
whenever a solution operator must be evaluated many times. Fluid dynamics is a
main field of application. Fourier neural operators predict turbulent solutions
of the two-dimensional Navier--Stokes equations \cite{Li2021FNO}, reproduce the
long-time statistics of chaotic dissipative systems \cite{Li2022Chaotic}, and
forecast the global weather \cite{Pathak2022FourCastNet}. Closest to the present
problem, they have been trained on two-dimensional Rayleigh--B\'enard convection,
a Boussinesq flow driven by buoyancy, where they predict the velocity and
temperature fields more accurately than earlier data-driven models
\cite{Straat2025}. Physics-informed neural networks \cite{Raissi2019PINN} and
their operator versions \cite{Wang2021PIDeepONet,Li2024PINO} add the residual of
the equation to the training loss, so that the network learns from the equation
as well as from solutions.

Machine learning has also begun to assist mathematical analysis. It has suggested
conjectures and guided their proofs in pure mathematics \cite{Davies2021}, and it
has constructed examples that disprove conjectures or improve known bounds
\cite{Wagner2021,RomeraParedes2024}. For the equations of fluid motion, neural
networks have found an approximate self-similar blow-up profile of the
three-dimensional axisymmetric Euler equations, which also solves the closely
related two-dimensional Boussinesq equations
\cite{WangLaiGomezSerranoBuckmaster2023}, and new families of unstable
singularities \cite{Wang2025Unstable}, accurate enough to serve as a starting
point for computer-assisted proofs. In these works the network proposes
candidates and a separate rigorous or high-precision computation decides whether
they are valid. The same division of labor underlies multifidelity methods, in
which a cheap model explores and an accurate model confirms
\cite{Peherstorfer2018}. In the stability theory of fluids, the search for the
initial data that maximize the growth of a perturbation over a finite time is an
optimization problem, each step of which requires solving the equations forward
and backward in time \cite{PringleKerswell2010,Kerswell2018}.

Known structure can also be built into the network itself. Conservation laws
can be enforced by construction \cite{RichterPowell2022,Hansen2023}, and
asymptotic-preserving networks and neural operators build the limit of a small
parameter of the equation into the loss, so that one network stays accurate
across the range of the parameter \cite{JinMaWu2024,WuYanJinMa2023,ZhangJin2024}.
Asymptotics-informed operator networks add boundary-layer correctors from
singular perturbation theory to the output \cite{LeeHongKoLee2025}.
In these approaches the network learns from solutions computed by a solver, from
the residual of the equation, or from both, and the structure built into it comes
from the equation or from a formal limit. What rigorous analysis proves about the solutions,
such as which quantities are nearly conserved, that is, change only at higher
order in the amplitude, how large the various parts of the solution are, and which quantities depend on the data in an ill-conditioned way, is used
less systematically in the design of the network. In this paper we design a
neural operator from the rigorous long-time analysis of a partial differential
equation, the two-dimensional Boussinesq system with anisotropic dissipation, and
use it to obtain information that the analysis alone does not give. The analysis
decides what the network predicts, how its output is split, and how each part is
scaled with the amplitude of the data. The trained network then takes the place
of the solver in a search over initial data for a lower bound on the constant of
the analytical estimate, and the solver confirms the candidates it finds. The
remainder of this introduction describes the physical problem and the analytical
result on which the design rests.

Buoyancy-driven flows in the Boussinesq approximation are a canonical model of
stratified fluids in the atmosphere and ocean, where the interplay of velocity
and temperature (or density) governs the onset of convection, the persistence of
shear flows, and the emergence of stratified states
\cite{Pedlosky1987,Cushman2011}. In many geophysical regimes the dissipation is
strongly \emph{anisotropic}: turbulent transport and rotation make the effective
diffusion in the horizontal and vertical directions markedly different, and it is
natural, both physically and mathematically, to study models in which the
dissipation is present in only one direction of each field. The system studied
here retains horizontal viscosity in the momentum equation and vertical thermal
diffusion in the temperature equation, and no dissipation in the complementary
directions.

The stability of such partially dissipated Boussinesq systems near the hydrostatic
equilibrium has been established in a sequence of works: for the strip
$\mathbb T\times\mathbb R$ with horizontal velocity dissipation and vertical
thermal diffusion \cite{Adhikari2022}, for related partial-dissipation
configurations \cite{LAI2021764}, and for the buoyancy-driven case without any
thermal diffusion \cite{TaoWuZhaoZheng2020,Doering2018}. A recurring theme is that
the buoyancy coupling itself is stabilizing: the exchange between the vertical
velocity and the temperature converts the momentum equation, which is damped only
in the horizontal direction, into
a damped-wave-type system, so that small perturbations remain globally regular and
uniformly bounded. On the periodic box $\mathbb T^2$, which is the setting of the
present paper, the corresponding uniform $H^2$ stability estimate has become
available only recently \cite{GaoWuZheng2026}. The energy method used on the strip
in \cite{Adhikari2022} does not carry over to the box. On a bounded vertical
interval the one-dimensional Sobolev inequality acquires an additional
zeroth-order term, which breaks the anisotropic product estimates on which that
method rests, and a different mechanism, special to the box, is needed. This estimate is recalled in
Appendix~\ref{app:stability}.

What these stability results do not by themselves settle is the
\emph{long-time limit}: given that a small perturbation exists and remains small
for all time, what does it converge to, and how much of the limit is determined by the
initial data?

 If the dissipation is isotropic and present
in both fields, as in the full-Laplacian system
$\partial_t\mathbf u+\mathbf u\cdot\nabla\mathbf u=-\nabla p+\nu\Delta\mathbf u+\theta\mathbf e_2$,
$\partial_t\theta+\mathbf u\cdot\nabla\theta+u_2=\eta\Delta\theta$, the answer on
$\mathbb T^2$ is immediate: the two coupling terms cancel in the $L^2$ energy
balance, leaving
\[
\tfrac{d}{dt}\big(\|\mathbf u\|_{L^2}^2+\|\theta\|_{L^2}^2\big)
=-2\nu\|\nabla\mathbf u\|_{L^2}^2-2\eta\|\nabla\theta\|_{L^2}^2,
\]
and, provided the conserved mean $\int_{\mathbb T^2}u_1$ also vanishes initially,
the Poincar\'e inequality on zero-mean fields forces the entire perturbation to
decay exponentially to zero, leaving no persistent structure. For the anisotropic
system studied here, with dissipation $\nu\partial_{11}\mathbf u$ and
$\eta\partial_{22}\theta$ only, the same computation gives
\[
\tfrac{d}{dt}\big(\|\mathbf u\|_{L^2}^2+\|\theta\|_{L^2}^2\big)
=-2\nu\|\partial_1\mathbf u\|_{L^2}^2-2\eta\|\partial_2\theta\|_{L^2}^2 .
\]
The right-hand side vanishes on the velocity modes with $k_1=0$ and on the
temperature modes with $k_2=0$, so the Poincar\'e inequality no longer closes the
estimate and the buoyancy coupling itself must supply the stabilization. The closest precedent on the same domain is
\cite{TaoWuZhaoZheng2020} (see also \cite{Doering2018}), which treats the two-dimensional
Boussinesq system on $\mathbb T^2$ with \emph{full} (isotropic) velocity viscosity but
\emph{no} thermal diffusion, and shows that the velocity decays to zero while the
temperature tends to an explicit function of $x_2$ alone. For the system with
full dissipation in both fields, a second-order numerical scheme has been
developed and analyzed in \cite{AlhomsiWuZheng2026}, and its long-time
simulations show the perturbation relaxing exponentially to zero. Our system differs from that of \cite{TaoWuZhaoZheng2020} in
two ways. The velocity dissipation is made anisotropic (horizontal only) and a
vertical thermal diffusion is added. The roles are then
reversed: it is now the \emph{temperature} that decays to zero and the
\emph{vorticity} that tends to a stratified limit, that is, a function of $x_2$
alone (see Theorem~\ref{thm:main}).

The design of our neural operator is guided by the long-time behavior of the
system on the periodic box $\mathbb T^2=[0,2\pi]^2$, which we establish in
Theorem~\ref{thm:main}. Writing any field as its $x_1$-average $\bar g(x_2,t)$
plus its zero-average oscillation $\widetilde g=g-\bar g$, the analysis gives the
following picture. The horizontal viscosity, together with the buoyancy coupling,
damps the oscillatory (that is, $x_1$-dependent) part of the flow: the
oscillation of the vorticity decays and the temperature perturbation decays to
zero. What remains is a purely horizontal, stratified shear flow
$\mathbf u=(\bar u_1(x_2),0)$ carrying the horizontally averaged vorticity
$\bar\omega=-\partial_2\bar u_1$. The averaged vorticity is not conserved, since
the nonlinear advection contributes the term
$\partial_{22}\overline{\widetilde u_1\widetilde u_2}$ to its evolution, where
$\overline{\widetilde u_1\widetilde u_2}$ is the Reynolds stress. However, the
oscillation is of the order $\varepsilon$ of the perturbation and square
integrable in time, and the Reynolds stress is quadratic in it, so the change
$\bar\omega(\cdot,t)-\bar\omega_0$ of the horizontal average is
$O(\varepsilon^2)$ for all time. For the neural operator, this picture tells what
to predict and how large each part is. The final vorticity is the known initial
average $\bar\omega_0$ plus a change of the average of order $\varepsilon^2$ and
an oscillation of order $\varepsilon$, and the temperature, which decays to zero,
is not needed as an output.

The anisotropy makes the solution
operator difficult to learn with a standard neural operator, for three reasons.
First, the quantity bounded by the analysis is small. The change of the horizontal
average of the vorticity is of order $\varepsilon^2$, while the vorticity is of
order $\varepsilon$, and in our data this change is three to four orders of
magnitude below $\bar\omega_0$ in $L^2$. A network that predicts the final
vorticity as a whole would hide the change inside its own error. Second, the
parts of the solution grow at different rates with the amplitude, so the targets
at the smallest and the largest amplitudes differ greatly in size. Third, since
each field is dissipated in only one direction, the temperature modes independent
of $x_2$ are damped by factors that differ by more than ten orders of magnitude
between small and large horizontal wavenumbers. The late-time temperature
therefore depends on the initial data in a severely ill-conditioned way, and in
a preliminary experiment a network that also predicted it did not converge. The analysis
resolves each of these difficulties. We train a Fourier neural operator on $10^4$
initial-to-final pairs generated by a pseudo-spectral solver at ten amplitudes,
predict only the vorticity, and give the network the structure that the analysis
identifies: its output is the initial horizontal average $\bar\omega_0$ plus a
learned approximation of $\omega(\cdot,T)-\bar\omega_0$, split into its
horizontal average and its oscillation, each divided by its size, which grows
as $\varepsilon^2$ and $\varepsilon$, respectively. We then test the bound of the
analysis directly, on $2\times10^3$ further
solver runs and on the surrogate's predictions for the same initial data.

The remainder of the paper is organized as follows. Section~\ref{sec_governing}
sets up the governing equations, the hydrostatic equilibrium, and the
mean/oscillation decomposition. Section~\ref{sec_asymptotic} states the standing
hypotheses, the main result, Theorem~\ref{thm:main}, on the asymptotic
stratification and its limit profile, and the linear decay rate,
Proposition~\ref{prop:linear}.
Section~\ref{sec_NO} introduces
the Fourier neural operator (Sec.~\ref{sec_NOintro}), describes the training data
(Sec.~\ref{sec_trainingdata}), explains why the late-time temperature is not a
prediction target (Sec.~\ref{sec_blade}), sets out the network and its training
(Sec.~\ref{sec_NOtrain}), reports the accuracy and cost of the surrogate
(Sec.~\ref{sec_no_accuracy}), tests Theorem~\ref{thm:main}(b) on solver runs
and surrogate predictions (Sec.~\ref{sec_thm2_test}), compares the surrogate with networks trained without the analysis (Sec.~\ref{sec_baseline}), and uses the surrogate to
search for a lower bound for the constant of Theorem~\ref{thm:main}(b)
(Sec.~\ref{sec_search}).
Section~\ref{sec_discussion} summarizes the results.
Appendix~\ref{app:stability} states the uniform stability estimate and derives
from it the bound used in the proof,
Appendix~\ref{app:thm} contains the proof of Theorem~\ref{thm:main}, and
Appendix~\ref{app:linear} the proof of Proposition~\ref{prop:linear}.

\section{Governing equations and the mean/oscillation decomposition}
\label{sec_governing}
\subsection{The perturbation system}
We consider the two-dimensional anisotropic Boussinesq system from \cite{Adhikari2022}, with the
domain taken to be the square box $\Omega=[0,2\pi]^2$.
Denote the velocity vector as $U$,  the pressure as $P$,
and the temperature as $\Theta$.
The governing equations  are
\begin{eqnarray}
\partial_t U + U\cdot \nabla U &=& -\nabla P + \nu \partial_{11} U + \Theta \mathbf e_2,
\quad x=(x_1,x_2)\in\Omega,
\quad t>0,
\nonumber\\
\partial_t \Theta + U\cdot \nabla \Theta &=& \eta \partial_{22}\Theta,
\nonumber\\
\nabla\cdot U &=& 0,
\nonumber
\end{eqnarray}
where $\nu$ is the kinematic viscosity and $\eta$ the thermal diffusivity.
Here $\mathbf e_2$ is the unit vector in the vertical direction.

Denote a hydrostatic equilibrium as $(U_{he}, P_{he},\Theta_{he})=(0,\frac{1}{2}x^2_2, x_2)$ and the perturbation as $(\mathbf u, p, \theta)=(U-U_{he}, P-P_{he}, \Theta-\Theta_{he})$. We assume the perturbation to be periodic in both spatial directions with period $2\pi$. That is, the domain of the perturbation is the periodic box $\mathbb T^2=[0,2\pi]^2$.

The governing equations are, with the notation $\mathbf u=(u_1, u_2)$,
\begin{eqnarray}
\partial_t \mathbf u + \mathbf u\cdot\nabla \mathbf u &=& -\nabla p
+ \nu \partial_{11} \mathbf u + \theta \mathbf e_2,
\label{per001}
\\
\partial_t \theta + \mathbf u\cdot \nabla \theta + u_2
&=& \eta \partial_{22} \theta,
\label{per002}
\\
\nabla\cdot \mathbf u&=& 0.
\label{per003}
\end{eqnarray}
An equivalent formulation uses the vorticity $\omega=\nabla\times \mathbf u=\partial_1 u_2 - \partial_2 u_1$ and the stream function $\psi$,
\begin{eqnarray}
\partial_t \omega + \mathbf u\cdot \nabla\omega &=&
\nu \partial_{11} \omega + \partial_1 \theta,\label{vort_omega}\\
-\Delta \psi &=& \omega ,\label{psi_poisson}\\
\mathbf u &=& \nabla^{\perp} \psi = (\partial_2 \psi, -\partial_1 \psi),\label{u_psi}\\
\partial_t \theta + \mathbf u\cdot \nabla \theta + u_2
&=& \eta \partial_{22} \theta. \label{vort_theta}
\end{eqnarray}


\subsection{Mean/oscillation decomposition}
\label{sec_meanosc}

 For a field $g(x_1,x_2,t)$ that is periodic in
$x_1$ we write
\begin{equation}
\bar g(x_2,t):=\frac1{2\pi}\int_0^{2\pi} g(x_1,x_2,t)\,dx_1,
\qquad
\widetilde g:=g-\bar g,
\label{avg_def}
\end{equation}
so that $\bar g$ is independent of $x_1$, and $\widetilde g$ has zero $x_1$-mean for
every $x_2$. A field independent of $x_1$ is called stratified. The vertical counterpart of \eqref{avg_def},
\begin{equation}
Q_0g(x_1,t):=\frac1{2\pi}\int_0^{2\pi} g(x_1,x_2,t)\,dx_2,
\qquad
g^{\perp}:=g-Q_0g,
\label{vert_avg}
\end{equation}
enters only in Appendix~\ref{app:stability}.

The two decompositions separate what each dissipative term can reach. The
horizontal viscosity $\nu\partial_{11}$ annihilates the $x_1$-average and acts as
$-\nu k_1^2$ on the oscillation, so it damps $\widetilde g$ and leaves $\bar g$
untouched. By
contrast, the vertical diffusion $\eta\partial_{22}$ annihilates $Q_0g$ and damps
$g^{\perp}$. 

In the analysis we write the perturbation velocity $\mathbf u$ of
\eqref{per001}--\eqref{per003} as $u$, and its initial value as $u_0$.
 Each property in Lemma~\ref{lem:meanosc} is a direct consequence of the definitions, the periodicity of the box and, where
indicated, the incompressibility of $u$.

\begin{lem}[Mean/oscillation calculus]
\label{lem:meanosc}
Let $\bar g$ and $\widetilde g=g-\bar g$ be the horizontal mean and oscillation
defined in \eqref{avg_def}, and let $u=(u_1,u_2)$ be a $2\pi$-periodic,
divergence-free vector field with vorticity
$\omega=\partial_1 u_2-\partial_2 u_1$. For $f,g\in H^1(\mathbb T^2)$, and with
$X$ denoting $L^2$ or any of the norms of \eqref{norm_def} below, whenever the
norms involved are finite:
\begin{itemize}
\item[\rm(a)] $\overline{\bar g}=\bar g$,\quad $\overline{\widetilde g}=0$,\quad
  $\overline{\bar f\,\widetilde g}=\bar f\,\overline{\widetilde g}=0$,\quad
  $\overline{fg}=\bar f\,\bar g+\overline{\widetilde f\,\widetilde g}$.
\item[\rm(b)] $\overline{\partial_2 g}=\partial_2\bar g$,\quad
  $\widetilde{\partial_2 g}=\partial_2\widetilde g$,\quad
  $\overline{\partial_1 g}=0$,\quad $\partial_1 g=\partial_1\widetilde g$,\quad
  $\bar\omega=-\partial_2\bar u_1$,\quad
  $\widetilde\omega=\partial_1\widetilde u_2-\partial_2\widetilde u_1$.
\item[\rm(c)] $\|g\|_{X}^2=\|\bar g\|_{X}^2+\|\widetilde g\|_{X}^2$,\ \ and
  $\|\bar g\|_{X}\le\|g\|_{X}$, $\|\widetilde g\|_{X}\le\|g\|_{X}$.
\item[\rm(d)] $\partial_2\bar u_2=0$, and if in addition
  $\int_{\mathbb T^2}u_2\,dx=0$, then $\bar u_2\equiv0$ and
  $u_2=\widetilde u_2$.
\item[\rm(e)] {\it (Horizontal Poincar\'e)}\ \ since the lowest nonzero
  horizontal frequency on $[0,2\pi]$ is one,
  \begin{equation}
  \|\widetilde g\|_{L^2}\le\|\partial_1\widetilde g\|_{L^2}=\|\partial_1 g\|_{L^2},
  \qquad
  \|\partial_2\widetilde g\|_{L^2}=\|\widetilde{\partial_2 g}\|_{L^2}\le\|\partial_2 g\|_{L^2},
  \label{poincare_x}
  \end{equation}
  and hence, the first inequality being applied to $\widetilde g$ and to
  $\partial_2\widetilde g=\widetilde{\partial_2 g}$,
  $\|\widetilde g\|_{H^1}\le\sqrt2\,\|\partial_1 g\|_{H^1}$, in particular
  $\|\widetilde u\|_{H^1}\le\sqrt2\,\|\partial_1 u\|_{H^1}$.
\item[\rm(f)] {\it (Vertical Poincar\'e)}\ \ if $h$ satisfies
  $\int_0^{2\pi}h(x_1,x_2)\,dx_2=0$ for every $x_1$, then
  \begin{equation}
  \|h\|_{L^2}\le\|\partial_2 h\|_{L^2}.
  \label{poincare_y}
  \end{equation}
  
\item[\rm(g)] $\partial_2Q_0g=0$ and $\int_0^{2\pi}g^{\perp}\,dx_2=0$ for every
  $x_1$, so the decomposition $g=Q_0g+g^{\perp}$ of \eqref{vert_avg} is
  orthogonal in $L^2(\mathbb T^2)$, and $\partial_1$ commutes with both
  projections.
\end{itemize}
\end{lem}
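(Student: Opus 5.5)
Every item can be checked on Fourier series, so the plan is to expand $g(x_1,x_2)=\sum_{k\in\mathbb Z^2}\hat g(k)e^{ik\cdot x}$ and observe that the horizontal mean is the Fourier projection onto the modes with $k_1=0$:
\[
\bar g=\sum_{k_2}\hat g(0,k_2)e^{ik_2x_2},\qquad \widetilde g=\sum_{k_1\ne0}\hat g(k)e^{ik\cdot x}.
\]
Likewise, $Q_0$ is the projection onto the modes with $k_2=0$. Both are orthogonal projections commuting with every constant-coefficient Fourier multiplier, in particular with $\partial_1$, $\partial_2$, and any weight defining the norms in \eqref{norm_def}. I will assume these norms are Fourier-weighted $L^2$ norms ($H^s$, anisotropic $H^{s_1,s_2}$, or norms of the form $\|\partial^\alpha\cdot\|_{L^2}$). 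The lemma is then a list of consequences of these projection facts.

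For (a), the first two identities are idempotence: $\overline{\bar g}=\bar g$ and $\overline{\widetilde g}=\bar g-\bar g=0$. Since $\bar f$ does not depend on $x_1$, it can be pulled out of the $x_1$-average, giving $\overline{\bar f\widetilde g}=\bar f\,\overline{\widetilde g}=0$. Expanding $fg=(\bar f+\widetilde f)(\bar g+\widetilde g)$ and using this identity twice gives the formula for $\overline{fg}$.

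For (b), differentiating under the integral gives the $\partial_2$ identities. Periodicity gives $\overline{\partial_1 g}=\frac1{2\pi}[g]_0^{2\pi}=0$, and $\partial_1\bar g=0$ gives $\partial_1 g=\partial_1\widetilde g$. Applying these to $\omega=\partial_1u_2-\partial_2u_1$ yields $\bar\omega=-\partial_2\bar u_1$ and $\widetilde\omega=\partial_1\widetilde u_2-\partial_2\widetilde u_1$.

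For (c), the decomposition is orthogonal in each such norm: the cross term $\int \bar g\,\widetilde g\,dx=\int_0^{2\pi}\bar g\int_0^{2\pi}\widetilde g\,dx_1\,dx_2=0$ vanishes. For weighted norms the same holds because $\bar g$ and $\widetilde g$ have disjoint Fourier supports. The inequalities follow from the Pythagorean identity.

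For (d), averaging $\partial_1u_1+\partial_2u_2=0$ and using (b) gives $\partial_2\bar u_2=0$, so $\bar u_2$ is a constant. That constant equals $(2\pi)^{-2}\int u_2\,dx$, so $\bar u_2\equiv0$ when the integral vanishes, and then $u_2=\widetilde u_2$.

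For (e), Parseval gives $\|\widetilde g\|^2=\sum_{k_1\neq0}|\hat g|^2\le\sum k_1^2|\hat g|^2=\|\partial_1\widetilde g\|^2$. Together with (b) this gives the first inequality, and the second is (c) applied to $\partial_2 g$. For the $H^1$ bound, I sum $\|\widetilde g\|^2+\|\partial_1\widetilde g\|^2+\|\partial_2\widetilde g\|^2$ and bound each term by the first inequality, applied to $\widetilde g$, $\partial_1\widetilde g$ and $\partial_2\widetilde g$. The terms $\|\partial_1 g\|^2$ and $\|\partial_1\partial_2 g\|^2$ each appear at most twice, which gives the factor $\sqrt2$. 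This bookkeeping of constants is the only step needing care. I take $\|\cdot\|_{H^1}^2=\|\cdot\|_{L^2}^2+\|\nabla\cdot\|_{L^2}^2$; another equivalent convention may change the constant.

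Item (f) is the same Parseval argument in $k_2$, since the hypothesis means $\hat h(k_1,0)=0$. For (g), $Q_0$ is the $k_2=0$ projection, so $\partial_2Q_0g=0$, the fields $g^\perp$ have zero vertical mean, and the two pieces have disjoint Fourier supports, which makes the decomposition $L^2$-orthogonal. Finally, $\partial_1$ commutes with both projections because it is a Fourier multiplier.
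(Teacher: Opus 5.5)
Your proof is correct and takes essentially the paper's approach. The paper only asserts that every item follows directly from the definitions, the periodicity of the box and, for (d), incompressibility; its hints in (e) (lowest horizontal frequency one, with the first inequality applied to $\widetilde g$ and to $\partial_2\widetilde g$) match your Fourier/Parseval argument. As an aside, with the norm \eqref{norm_def} the same argument, using $(1+|k|^2)\le k_1^2(1+|k|^2)$ for $k_1\neq0$, even gives $\|\widetilde g\|_{H^1}\le\|\partial_1 g\|_{H^1}$, so the factor $\sqrt2$ is safe however you count the middle term.
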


\section{Main result: stratification and the limit profile}
\label{sec_asymptotic}

\subsection{Standing hypotheses on the initial data}
\label{sec_hyp}

Throughout, the initial data $(u_0,\theta_0)\in H^2(\mathbb T^2)$ are assumed to satisfy $\nabla\cdot u_0=0$ and 
\begin{equation}
\int_{\mathbb T^2}\theta_0\,dx=\int_{\mathbb T^2}u_{2,0}\,dx
=\int_{\mathbb T^2}u_{1,0}\,dx=0,
\qquad
\|u_0\|_{H^2}+\|\theta_0\|_{H^2}\le\varepsilon .
\label{data_hyp}
\end{equation}
Here and below, with the
Fourier coefficients $\hat f_k=(2\pi)^{-2}\int_{\mathbb T^2}f(x)e^{-ik\cdot x}\,dx$,
\begin{equation}
\|f\|_{H^{s}}^2=(2\pi)^2\sum_{k\in\mathbb Z^2}(1+|k|^2)^{s}|\hat f_k|^2,
\qquad s\in\mathbb R,
\label{norm_def}
\end{equation}
summed over the components of a vector field.

The mean conditions in \eqref{data_hyp} are preserved by the flow. Integrating
\eqref{per001}--\eqref{per002} over $\mathbb T^2$, the pressure and transport
terms vanish by periodicity and $\nabla\cdot u=0$, and
\begin{equation*}
\frac{d}{dt}\int_{\mathbb T^2}u_1\,dx=0,\qquad
\frac{d}{dt}\int_{\mathbb T^2}\theta\,dx=-\int_{\mathbb T^2}u_2\,dx,\qquad
\frac{d}{dt}\int_{\mathbb T^2}u_2\,dx=\int_{\mathbb T^2}\theta\,dx.
\end{equation*}
Thus $\int_{\mathbb T^2}u_1\,dx$ is conserved, and
\begin{align*}
\int_{\mathbb T^2}\theta\,dx&=\cos t\int_{\mathbb T^2}\theta_0\,dx-\sin t\int_{\mathbb T^2}u_{2,0}\,dx,\\
\int_{\mathbb T^2}u_2\,dx&=\sin t\int_{\mathbb T^2}\theta_0\,dx+\cos t\int_{\mathbb T^2}u_{2,0}\,dx,
\end{align*}
which are periodic in time with period $2\pi$ and do not decay unless both
initial means vanish. Under \eqref{data_hyp} all three means are zero for all
$t\ge0$.

\subsection{Statement of the main result}
By Theorem~\ref{thm:stability} of Appendix~\ref{app:stability}, quoted from
\cite{GaoWuZheng2026}, there is $\varepsilon_*=\varepsilon_*(\nu,\eta)>0$ such
that for $\varepsilon\le\varepsilon_*$ the system \eqref{per001}--\eqref{per003}
with initial data satisfying \eqref{data_hyp} has a unique global solution. This
solution is bounded by $C_*\varepsilon$ in $H^2$ for all time, and its
dissipation integral
$\int_0^\infty(\nu\|\partial_1u\|_{H^2}^2+\eta\|\partial_2\theta\|_{H^2}^2)\,d\tau$
is bounded by $C_\nu\varepsilon^2$. Theorem~\ref{thm:main} describes the
long-time behavior of this solution.

Averaging the first component of \eqref{per001} over $x_1$, the terms
$-\partial_1p$ and $\nu\partial_{11}u_1$ drop out by Lemma~\ref{lem:meanosc}(b).
Since $\nabla\cdot u=0$, $u\cdot\nabla u_1=\partial_1(u_1^2)+\partial_2(u_1u_2)$,
and the first term averages to zero. Hence
$\partial_t\bar u_1=-\partial_2\,\overline{u_1u_2}$.
By Lemma~\ref{lem:meanosc}(a),
$\overline{u_1u_2}=\bar u_1\bar u_2+\overline{\widetilde u_1\widetilde u_2}$, and by
the mean conditions of \eqref{data_hyp}, which hold for all $t\ge0$, and
Lemma~\ref{lem:meanosc}(d), $\bar u_2=0$. Therefore
$\partial_t\bar u_1=-\partial_2\,\overline{\widetilde u_1\widetilde u_2}$, and since
$\bar\omega=-\partial_2\bar u_1$ by Lemma~\ref{lem:meanosc}(b),
\begin{equation}
\partial_t\bar\omega=\partial_{22}\,\overline{\widetilde u_1\widetilde u_2}.
\label{mean_eq}
\end{equation}

\begin{thm}[Small-data stratification and the limit profile on $\mathbb T^2$]
\label{thm:main}
Let $\varepsilon_*=\varepsilon_*(\nu,\eta)>0$ be the threshold of
Theorem~\ref{thm:stability}.
Let the initial data satisfy \eqref{data_hyp} with $\varepsilon\le\varepsilon_*$
and let $(u,\theta)$ be the corresponding global solution of
\eqref{per001}--\eqref{per003}. Let $\omega=\partial_1u_2-\partial_2u_1$, and let $\bar\omega$ and
$\bar\omega_0$ be the horizontal averages \eqref{avg_def} of $\omega$ and
$\omega_0$. Then the zero means are
preserved, $\int_{\mathbb T^2}\theta(t)=\int_{\mathbb T^2}u_2(t)=0$ for all
$t\ge0$, and the following hold.
\begin{itemize}
\item[\rm(a)] {\it (Decay of the vertical velocity and the temperature)} As
$t\to\infty$,
\[
u_2(t)\to0
\quad\text{and}\quad
\theta(t)\to0
\qquad\text{in } H^s(\mathbb T^2)\ \text{ for every } s<2 .
\]
\item[\rm(b)] {\it (Stratification and limit profile of the vorticity)} There is
$\bar\omega_\infty=\bar\omega_\infty(x_2)\in H^1(\mathbb T^2)$ such that
$\omega(\cdot,t)\to\bar\omega_\infty$ in $H^s(\mathbb T^2)$ for every $s<1$ as
$t\to\infty$, and
\begin{equation}
\bar\omega_\infty=\bar\omega_0+\int_0^\infty\partial_{22}\,
\overline{\widetilde u_1\widetilde u_2}\,(\cdot,t)\,dt,
\label{limit_rep}
\end{equation}
where the integral converges in $H^{-2}$. Moreover
\[
\sup_{t\ge0}\ \|\bar\omega(\cdot,t)-\bar\omega_0\|_{H^{-2}}\le C\varepsilon^{2}
\qquad\text{and}\qquad
\|\bar\omega_\infty-\bar\omega_0\|_{H^{-2}}\le C\varepsilon^{2},
\]
where $C$ depends only on $\nu$.
\end{itemize}
\end{thm}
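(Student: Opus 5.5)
The proof rests on Theorem~\ref{thm:stability}. It gives the uniform bound $\|u(t)\|_{H^2}+\|\theta(t)\|_{H^2}\le C_*\varepsilon$ and the dissipation bound $\int_0^\infty(\nu\|\partial_1u\|_{H^2}^2+\eta\|\partial_2\theta\|_{H^2}^2)\,d\tau\le C_\nu\varepsilon^2$. The preservation of the zero means was already shown in Section~\ref{sec_hyp}, and $\bar u_2\equiv0$ follows from Lemma~\ref{lem:meanosc}(d).

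For part (b), the first step is the $O(\varepsilon^2)$ bound, which I expect to be easy. By Lemma~\ref{lem:meanosc}(e), $\|\widetilde u\|_{H^1}\le\sqrt2\|\partial_1u\|_{H^1}$, so $\int_0^\infty\|\widetilde u\|_{H^1}^2\,dt\le 2C_\nu\varepsilon^2/\nu$. In $H^{-2}$ we have $\|\partial_{22}f\|_{H^{-2}}\le\|f\|_{L^2}$, and on $\mathbb T^2$, by Cauchy--Schwarz in $x_1$ and Lemma~\ref{lem:meanosc}(c),
\[
\|\overline{\widetilde u_1\widetilde u_2}\|_{L^2}\le\|\widetilde u_1\widetilde u_2\|_{L^2}\le C\|\widetilde u_1\|_{H^1}\|\widetilde u_2\|_{H^1}\le C\|\widetilde u\|_{H^1}^2 .
\]
This uses the 2D embedding $H^1\subset L^4$; one could instead use $L^\infty\times L^2$ with $H^2$, but the $H^1$ route keeps the constant depending only on $\nu$ through $C_\nu$. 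Integrating \eqref{mean_eq} in time then gives $\|\bar\omega(t)-\bar\omega_0\|_{H^{-2}}\le C\int_0^\infty\|\widetilde u\|_{H^1}^2\le C\varepsilon^2$. The same estimate shows that the time integral in \eqref{limit_rep} converges absolutely in $H^{-2}$. Hence $\bar\omega(t)$ converges in $H^{-2}$ to the limit $\bar\omega_\infty$ defined by \eqref{limit_rep}, and that limit obeys the same bound.

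Next, I upgrade this convergence. Since $\|\bar\omega(t)\|_{H^1}\le\|u(t)\|_{H^2}\le C\varepsilon$ uniformly, weak compactness gives $\bar\omega_\infty\in H^1$, and it depends only on $x_2$. Interpolating between $H^{-2}$ convergence and $H^1$ boundedness yields $\bar\omega(t)\to\bar\omega_\infty$ in $H^s$ for every $s<1$.

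The main obstacle is to show $\widetilde\omega(t)\to0$, since we only know $\int_0^\infty\|\widetilde\omega\|_{H^1}^2\,dt<\infty$, which is not pointwise decay. I would use the standard argument that a nonnegative integrable function with bounded derivative tends to zero. Take $f(t)=\|\widetilde\omega(t)\|_{L^2}^2\le\|\partial_1u\|_{H^1}^2$ (up to constants), which is integrable. Its derivative is computed from \eqref{vort_omega}: the oscillatory part satisfies $\partial_t\widetilde\omega=\nu\partial_{11}\widetilde\omega+\partial_1\theta-\widetilde{u\cdot\nabla\omega}$. Pairing with $\widetilde\omega$ gives $|f'|\lesssim\|\theta\|_{H^1}\|\widetilde\omega\|_{L^2}+\|u\|_{H^2}\|\omega\|_{H^1}\|\widetilde\omega\|_{L^2}$, while the viscous term only helps. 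All of these are bounded uniformly by Theorem~\ref{thm:stability}. So $f(t)\to0$, and interpolation with the uniform $H^1$ bound gives $\widetilde\omega\to0$ in $H^s$ for every $s<1$. Combined with the previous paragraph, this proves (b).

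For part (a), $u_2=\widetilde u_2$ satisfies $\|u_2\|_{L^2}\le\|\partial_1u\|_{L^2}$, and its time derivative is bounded uniformly in $L^2$ using \eqref{per001} and the $H^2$ bounds (the pressure is handled through the Leray projection). The same integrable-plus-Lipschitz argument gives $u_2\to0$ in $L^2$, and interpolation with $H^2$ boundedness gives convergence in $H^s$ for $s<2$. The temperature needs more care because only $\partial_2\theta$ is dissipated. Its component $\theta^\perp$ satisfies $\|\theta^\perp\|_{L^2}\le\|\partial_2\theta\|_{L^2}$ by Lemma~\ref{lem:meanosc}(f), so $\theta^\perp\to0$ by the same argument. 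For the vertical mean $Q_0\theta$, I would use the vorticity equation instead. Testing \eqref{vort_omega} against a smooth $\phi$ and integrating over $[t,t+1]$ shows that $\int_t^{t+1}\partial_1\theta\,d\tau\to0$ weakly, because the other terms involve $\partial_t\omega$, integrated to $\omega(t+1)-\omega(t)\to0$, together with $\partial_1$-dissipated quantities and products containing $\widetilde u$. This, together with $\partial_tQ_0\theta=-Q_0u_2-Q_0(u\cdot\nabla\theta)\to0$, forces $\partial_1Q_0\theta\to0$. Since $Q_0\theta$ has zero mean in $x_1$ (the total mean of $\theta$ vanishes), a Poincar\'e inequality in $x_1$ then gives $Q_0\theta\to0$, and interpolation finishes. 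This step for $Q_0\theta$ is where I expect the real work; the rest is routine.
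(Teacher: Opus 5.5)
Your proof is correct. For part~(b) and for $u_2$ it is essentially the paper's argument: Ladyzhenskaya's inequality with $\int_0^\infty\|\widetilde u\|_{H^1}^2\,dt\le2\nu^{-1}C_\nu\varepsilon^2$, Barbalat's lemma, and interpolation between $H^{-2}$ and $H^1$. The only small difference there is that you apply Barbalat to $\|\widetilde\omega\|_{L^2}^2$ directly, whereas the paper deduces $\|\widetilde\omega\|_{L^2}\le C\|\widetilde u\|_{H^1}\to0$ from the decay of $\|\partial_1u\|_{L^2}$.

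The genuine difference is the vertical mean $Q_0\theta$ of the temperature. The paper does no work there. Theorem~\ref{thm:stability} also supplies \eqref{V2}, $\int_0^\infty\|\partial_1Q_0\theta\|_{L^2}^2\,d\tau\le C_\nu\varepsilon^2$. Combined with the vertical Poincar\'e inequality for $\theta^\perp$, this makes $\|\partial_1\theta\|_{L^2}^2$ integrable in time (Remark~\ref{rmk:unifbound}). Barbalat applied to $\|\partial_1\theta\|_{L^2}^2$ and $\|\partial_2\theta\|_{L^2}^2$, followed by the Poincar\'e inequality on $\mathbb T^2$, then gives $\theta\to0$ in a few lines.

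You use only \eqref{V0}--\eqref{V1} and recover the decay of $Q_0\theta$ from the vorticity balance, that is, from the buoyancy coupling on the modes with $k_2=0$. This shows that \eqref{V2} is not needed for the qualitative conclusion. The cost is a longer argument, and part~(b) must be proved first, since you use $\omega(t+1)-\omega(t)\to0$.

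One step is stated too quickly. The claim $\partial_tQ_0\theta\to0$ is not automatic: $u_1$ does not decay, and $Q_0(u_1\partial_1\theta)$ contains $\partial_1Q_0\theta$, the very quantity you are trying to control. Repair it by writing $Q_0(\bar u_1\partial_1\theta)=Q_0(\bar u_1\,\partial_1\theta^\perp)+(Q_0\bar u_1)\,\partial_1Q_0\theta$:
\begin{itemize}
\item the second term vanishes because $Q_0\bar u_1=(2\pi)^{-2}\int_{\mathbb T^2}u_1\,dx=0$, so the zero mean of $u_{1,0}$ is genuinely used here;
\item the first term tends to zero because $\partial_1\theta^\perp\to0$ in $L^2$, by your decay of $\theta^\perp$ and interpolation with the $H^2$ bound;
\item the terms with $\widetilde u_1$ and $u_2$ tend to zero because $\widetilde u\to0$ in $H^s$ for $s<2$, hence in $L^\infty$.
\end{itemize}
Finally, the weak convergence $\partial_1Q_0\theta(t)\rightharpoonup0$ that you obtain should be upgraded to strong $L^2$ convergence, using the uniform $H^1$ bound and Rellich's theorem, before you apply the Poincar\'e inequality in $x_1$.
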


The proof is given in Appendix~\ref{app:thm}.
Since $\bar\omega_0$ is determined by $u_0$ alone, Theorem~\ref{thm:main}(b)
shows that the initial temperature affects the limit only at order
$\varepsilon^2$: for a fixed $u_0$, any two $\theta_0$ admissible under
\eqref{data_hyp} give limits $\bar\omega_\infty$ that differ by at most
$2C\varepsilon^2$ in $H^{-2}$. By \eqref{limit_rep},
$\bar\omega_\infty-\bar\omega_0$ is a nonlinear functional of the whole
trajectory, and no formula for it in terms of the initial data is known.
Section~\ref{sec_NO} approximates the map from the initial data to it by a
neural operator.

\subsection{The linear decay rate}

\begin{prop}[Linear decay of $(u_2,\theta)$]
\label{prop:linear}
Consider the linearization of \eqref{vort_omega} and \eqref{per002} about the
hydrostatic state,
\begin{equation}
\partial_t\omega=\nu\partial_{11}\omega+\partial_1\theta,\qquad
\partial_t\theta=\eta\partial_{22}\theta-u_2,
\label{lin_sys}
\end{equation}
with $\mathbf u$ obtained from $\omega$ by the Biot--Savart law \eqref{psi_poisson}
and \eqref{u_psi}. For each $k=(k_1,k_2)\in\mathbb Z^2\setminus\{0\}$ the Fourier
coefficients satisfy
\begin{equation}
\frac{d\hat u_2}{dt}=-\nu k_1^2\,\hat u_2+\frac{k_1^2}{|k|^2}\,\hat\theta,\qquad
\frac{d\hat\theta}{dt}=-\hat u_2-\eta k_2^2\,\hat\theta .
\label{lin_mode}
\end{equation}
\begin{itemize}
\item[(a)] If $k_1\neq0$ and $k_2\neq0$, then
\[
|\hat u_2(t)|^2+\frac{k_1^2}{|k|^2}|\hat\theta(t)|^2\le
e^{-2\min\{\nu,\eta\}t}\Big(|\hat u_2(0)|^2+\frac{k_1^2}{|k|^2}|\hat\theta(0)|^2\Big).
\]
If $k_1=0$ and $k_2\neq0$, then $\hat u_2\equiv0$ and
$\hat\theta(t)=e^{-\eta k_2^2t}\hat\theta(0)$.
\item[(b)] If $k_2=0$ and $\nu k_1^2>2$, then
$\hat\theta(t)=a_+e^{\lambda_+t}+a_-e^{\lambda_-t}$, with constants $a_\pm$
determined by $\hat u_2(0)$ and $\hat\theta(0)$, $\lambda_-\le-\nu k_1^2/2<-1$, and
\[
\lambda_+=\frac{-2}{\nu k_1^2+\sqrt{\nu^2k_1^4-4}},\qquad
\frac{1}{\nu k_1^2}\le|\lambda_+|\le\frac{2}{\nu k_1^2} <1.
\]
In particular $\lambda_+\to0$ as $|k_1|\to\infty$, so the linear decay of $\theta$
has no uniform rate.
\end{itemize}
\end{prop}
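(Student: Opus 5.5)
The plan is to reduce everything to an explicit $2\times2$ linear ODE for each Fourier mode, and then handle the cases by an energy identity (part (a)) or by explicit diagonalization (part (b)).

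\emph{Derivation of \eqref{lin_mode}.} For $k\neq0$, \eqref{psi_poisson} gives $\hat\psi_k=\hat\omega_k/|k|^2$, and \eqref{u_psi} gives $u_2=-\partial_1\psi$, so
\[
\hat u_2=-\frac{ik_1}{|k|^2}\,\hat\omega .
\]
Differentiate this relation in time and insert the Fourier form of the first equation of \eqref{lin_sys}, namely $\partial_t\hat\omega=-\nu k_1^2\hat\omega+ik_1\hat\theta$. This gives
\[
\frac{d\hat u_2}{dt}=-\nu k_1^2\hat u_2+\frac{k_1^2}{|k|^2}\hat\theta .
\]
The $\theta$-equation of \eqref{lin_mode} is read off directly from the second equation of \eqref{lin_sys}.

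\emph{Part (a).} Use the weighted energy
\[
E(t)=|\hat u_2|^2+\frac{k_1^2}{|k|^2}|\hat\theta|^2 .
\]
Computing $\frac{d}{dt}E=2\,\mathrm{Re}(\dot{\hat u}_2\overline{\hat u_2})+2\frac{k_1^2}{|k|^2}\mathrm{Re}(\dot{\hat\theta}\,\overline{\hat\theta})$, the coupling terms $\pm 2\frac{k_1^2}{|k|^2}\mathrm{Re}(\hat\theta\overline{\hat u_2})$ cancel exactly; the weight $k_1^2/|k|^2$ is chosen precisely for this cancellation. What remains is
\[
\frac{dE}{dt}=-2\nu k_1^2|\hat u_2|^2-2\eta k_2^2\frac{k_1^2}{|k|^2}|\hat\theta|^2 .
\]
When $k_1,k_2\neq0$ we have $k_1^2,k_2^2\ge1$, so $\frac{dE}{dt}\le-2\min\{\nu,\eta\}E$, and Gr\"onwall's inequality gives the stated bound. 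When $k_1=0$, the relation $\hat u_2=-ik_1\hat\omega/|k|^2$ forces $\hat u_2\equiv0$. The $\theta$-equation then decouples to $\dot{\hat\theta}=-\eta k_2^2\hat\theta$, whose solution is the stated exponential.

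\emph{Part (b).} Here $k_2=0$, so $k_1^2/|k|^2=1$ and the system becomes $\dot{\hat u}_2=-\nu k_1^2\hat u_2+\hat\theta$, $\dot{\hat\theta}=-\hat u_2$. Eliminating $\hat u_2=-\dot{\hat\theta}$ gives
\[
\ddot{\hat\theta}+\nu k_1^2\dot{\hat\theta}+\hat\theta=0 ,
\]
with characteristic polynomial $\lambda^2+\nu k_1^2\lambda+1=0$. Its roots are
\[
\lambda_\pm=\tfrac12\bigl(-\nu k_1^2\pm\sqrt{\nu^2k_1^4-4}\bigr),
\]
which are real and distinct because $\nu k_1^2>2$. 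Hence $\hat\theta=a_+e^{\lambda_+t}+a_-e^{\lambda_-t}$, with $a_\pm$ fixed by the two conditions $\hat\theta(0)$ and $\dot{\hat\theta}(0)=-\hat u_2(0)$; this linear system is solvable since $\lambda_+\neq\lambda_-$.

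Rationalizing $\lambda_+$ (multiply and divide by $\nu k_1^2+\sqrt{\nu^2k_1^4-4}$) gives $\lambda_+=-2/(\nu k_1^2+\sqrt{\nu^2k_1^4-4})$. The denominator lies in $[\nu k_1^2,2\nu k_1^2]$, which yields $\frac1{\nu k_1^2}\le|\lambda_+|\le\frac2{\nu k_1^2}<1$. Also $\lambda_-\le-\nu k_1^2/2<-1$ is immediate, since the square root is nonnegative and $\nu k_1^2>2$. Finally, $|\lambda_+|\to0$ as $|k_1|\to\infty$ follows from the upper bound, so the linear decay of $\theta$ has no uniform rate.

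There is no serious obstacle. The only delicate point is choosing the weight $k_1^2/|k|^2$ in $E$ so that the coupling terms cancel exactly; once that is fixed, the rest is routine algebra.
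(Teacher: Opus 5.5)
Your proof is correct and follows essentially the same route as the paper. You use the Biot--Savart relation $\hat u_2=-ik_1\hat\omega/|k|^2$ to derive \eqref{lin_mode}, the weighted energy $|\hat u_2|^2+(k_1^2/|k|^2)|\hat\theta|^2$ with Gr\"onwall for (a), and the characteristic polynomial $\lambda^2+\nu k_1^2\lambda+1$ with the rationalization of $\lambda_+$ for (b). The only cosmetic difference is that you eliminate $\hat u_2$ to get a scalar second-order ODE for $\hat\theta$, where the paper reads the same polynomial off the $2\times2$ matrix.
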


The proof is given in Appendix~\ref{app:linear}. The absence of a uniform decay rate in part (b) is consistent with Theorem~\ref{thm:main}, which treats the nonlinear case and also does not give a decay rate.

\section{\texorpdfstring{A neural-operator surrogate, its training data, and a test of Theorem~\ref{thm:main}(b)}{A neural-operator surrogate, its training data, and a test of Theorem 1(b)}}
\label{sec_NO}
\subsection{Neural operators and the Fourier neural operator}
\label{sec_NOintro}

A neural operator (NO) \cite{Kovachki2022} is a deep-learning architecture that
learns a map between two infinite-dimensional function spaces, rather than
between finite-dimensional Euclidean vectors as in a standard neural network.
Given an input Banach space $\mathcal A$ and an output Banach space $\mathcal U$
of functions on a domain $\mathcal D\subset\mathbb R^d$ (here
$\mathcal D=\mathbb T^2$ and $d=2$), the target is a generally
nonlinear operator $\mathcal G^\dagger:\mathcal A\to\mathcal U$, and the neural
operator provides a parametric approximation
$\mathcal G_\phi\approx\mathcal G^\dagger$ trained from a finite set of
input-output pairs $\{(a^{(i)},z^{(i)})\}$. For the present problem this
operator maps the small-amplitude initial perturbation to the state at a later
time: the input is the pair of initial fields $a=(\omega_0,\theta_0)$, at an
amplitude $\varepsilon$ that varies across the data, and the output is the
vorticity $z=\omega(\cdot,T)$ at the final time (Sec.~\ref{sec_trainingdata}).

\paragraph{Architecture.}
A neural operator is a composition of a lifting map, a sequence of $L$ hidden
layers (here $L=4$), and a projection map,
\begin{equation}
\mathcal G_\phi
=\mathcal Q\circ\mathcal H_L\circ\cdots\circ\mathcal H_1\circ\mathcal P . \nonumber
\end{equation}
The pointwise \emph{lifting} operator $\mathcal P$ raises the input function $a$
to a function $v_0$ with more components, called channels, and the pointwise
\emph{projection} operator $\mathcal Q$ maps the last hidden function back to the
output $z$. Each hidden layer updates the current function
$v_\ell:\mathcal D\to\mathbb R^{d_{v_\ell}}$ by
\begin{equation}
v_{\ell+1}(x)=\varrho\!\Big(W_\ell\,v_\ell(x)+\big(\mathcal K_\ell v_\ell\big)(x)+b_\ell(x)\Big),
\qquad x\in \mathcal D,
\label{eq_NOlayer}
\end{equation}
where $\varrho$ is a fixed nonlinear activation applied pointwise, $W_\ell$ is a
local (pointwise) linear transformation, $b_\ell$ is a bias function, and
$\mathcal K_\ell$ is a nonlocal linear \emph{integral kernel operator}
\begin{equation}
\big(\mathcal K_\ell v_\ell\big)(x)=\int_{\mathcal D} \kappa^{(\ell)}(x,x')\,v_\ell(x')\,\mathrm dx' .
\label{eq_kernel}
\end{equation}

\paragraph{Fourier neural operator.}
The kernel \eqref{eq_kernel} admits several parameterizations, and we use the
\emph{Fourier neural operator} (FNO) \cite{Li2021FNO}, which represents the kernel directly in the
frequency domain. Assuming a translation-invariant kernel
$\kappa^{(\ell)}(x,x')=\kappa^{(\ell)}(x-x')$ and applying the convolution theorem turns
\eqref{eq_kernel} into a multiplication in Fourier space,
\begin{equation}
\big(\mathcal K_\ell v_\ell\big)(x)
=\mathcal F^{-1}\!\big(S^{(\ell)}_\phi\cdot\mathcal F(v_\ell)\big)(x),
\label{eq_FNO}
\end{equation}
where $\mathcal F$ and $\mathcal F^{-1}$ are the Fourier transform and its
inverse, and $S^{(\ell)}_\phi$ is a learned complex weight tensor acting on the
Fourier coefficients. Only the lowest wavenumbers, those with $|k_j|$ up to
$k_{\max}$ in each direction, are retained, and for each of them $S^{(\ell)}_\phi$ holds a trainable
complex $d_{v_{\ell+1}}\times d_{v_\ell}$ matrix. The
higher modes are truncated, which both limits the parameter count and acts as a
spectral smoothing filter. For real-valued fields the conjugate symmetry
$S(-k)=S(k)^{*}$ is imposed so that the output remains real. On a uniform
periodic grid $\mathcal F$ is evaluated by the fast Fourier transform, giving a
quasi-linear cost $\mathcal O(N^2\log N)$ on an $N\times N$ grid.

\subsection{Training data generation}
\label{sec_trainingdata}

The training pairs are produced by numerical simulation of the
perturbation system in its vorticity-stream-function form on the periodic box
$\mathbb T^2$, using a Fourier pseudo-spectral discretization in
space and a second-order semi-implicit time integrator.

\paragraph{Spatial discretization.}
All fields are represented by their truncated Fourier series on an $N\times N$
grid with $N=128$. Linear operators are diagonal in Fourier space:
differentiation is multiplication by $ik$, the stream-function solve
$-\Delta\psi=\omega$ is division by $|k|^2$, and the anisotropic dissipation
terms $\nu\partial_{11}\omega$ and $\eta\partial_{22}\theta$ become
$-\nu k_1^2\hat\omega$ and $-\eta k_2^2\hat\theta$. The quadratic advection terms
are evaluated pseudo-spectrally in conservative form,
$\mathbf u\cdot\nabla\omega=\nabla\cdot(\mathbf u\,\omega)$ and
$\mathbf u\cdot\nabla\theta=\nabla\cdot(\mathbf u\,\theta)$ (using
$\nabla\cdot\mathbf u=0$): the products $\mathbf u\,\omega$ and
$\mathbf u\,\theta$ are formed on the grid in physical space and their
divergence is taken in Fourier space.

\paragraph{Time integration.}
Writing the two evolution equations as
$\partial_t\omega=\mathcal N_\omega+\nu\partial_{11}\omega$ and
$\partial_t\theta=\mathcal N_\theta+\eta\partial_{22}\theta$ with the
nonlinear-and-coupling parts
\begin{eqnarray}
\mathcal N_\omega &=& -\,\mathbf u\cdot\nabla\omega+\partial_1\theta,\nonumber\\
\mathcal N_\theta &=& -\,\mathbf u\cdot\nabla\theta-u_2,\nonumber
\end{eqnarray}
the system is advanced with a second-order semi-implicit scheme based on
the backward differentiation formula (BDF2): the linear anisotropic dissipation is treated
implicitly, while the advection and the buoyancy and coupling terms
$\mathcal N_\omega,\mathcal N_\theta$ are treated explicitly by second-order
extrapolation from the two previous time levels. In Fourier space, for each
wavenumber $k=(k_1,k_2)$,
\begin{eqnarray}
\Big(\tfrac{3}{2\Delta t}+\nu k_1^2\Big)\hat\omega^{\,n+1}
&=& \frac{2\hat\omega^{\,n}-\tfrac12\hat\omega^{\,n-1}}{\Delta t}
+\widehat{\mathcal N}_\omega^{\,*},\nonumber\\
\Big(\tfrac{3}{2\Delta t}+\eta k_2^2\Big)\hat\theta^{\,n+1}
&=& \frac{2\hat\theta^{\,n}-\tfrac12\hat\theta^{\,n-1}}{\Delta t}
+\widehat{\mathcal N}_\theta^{\,*},\nonumber
\end{eqnarray}
where $\mathcal N^{*}$ denotes the nonlinear terms evaluated at the extrapolated
state $(\omega^{*},\theta^{*})=(2\omega^{n}-\omega^{n-1},\,
2\theta^{n}-\theta^{n-1})$, with the corresponding velocity obtained from
$\omega^{*}$ by the Biot--Savart law \eqref{psi_poisson} and \eqref{u_psi}. The
multistep scheme is initialized with a single first-order step of the same
semi-implicit form, backward Euler for the dissipation and forward Euler for
$\mathcal N_\omega,\mathcal N_\theta$. No dealiasing filter is applied to the quadratic terms. The Courant--Friedrichs--Lewy (CFL) number $\Delta t\,\|\mathbf u\|_{\infty}/h$, with $h=2\pi/N$, is
monitored at every step and kept below $1/2$.

As is standard for odd-order derivatives on an even grid
\cite[Ch.~3]{Trefethen2000SpectralMI}, the coefficients of the vorticity at the
Nyquist wavenumbers $k_1=N/2$ and $k_2=N/2$ are set to zero, in the initial data
and at every time step.
The simulations use $\nu=\eta=1$, time step $\Delta t=10^{-3}$, and are
integrated to the final time $T=100$.

\paragraph{Initial data and data sets.}
For each sample the initial vorticity $\omega_0$ and initial temperature
$\theta_0$ are sampled independently as zero-mean Gaussian random fields on
$\mathbb{T}^2$ with covariance operator $(\tau^2-\Delta)^{-\alpha}$. That is, the
Fourier coefficients are independent complex Gaussians with variance proportional
to $(\tau^2+|k|^2)^{-\alpha}$. For $\omega_0$ we take $\alpha=2.5$ and $\tau=7$,
the values used for the initial vorticity of the two-dimensional Navier--Stokes
equations in \cite{Kovachki2022}, and for $\theta_0$ we take $\alpha=3.5$ and
$\tau=7$. With probability one a sample lies in $H^s(\mathbb T^2)$ exactly when $s<\alpha-1$, and the
velocity is one derivative smoother than the vorticity, so $\mathbf u_0$ and
$\theta_0$ both lie in $H^s$ for every $s<5/2$, in particular in $H^2$. The
zero-frequency mode is set to zero, so that
$\int_{\mathbb{T}^2}\theta_0\,dx=0$ in accordance with the zero-mean hypothesis
of Theorem~\ref{thm:main}. The velocity is recovered from the vorticity through
the stream function (Biot--Savart law),
\begin{equation}
\hat{\mathbf u}(k)=\Big(\,i\,k_2/|k|^2,\ -\,i\,k_1/|k|^2\,\Big)\,\hat\omega(k),
\qquad k\neq0. \nonumber
\end{equation}
Finally each sample is
rescaled so that the smallness hypothesis \eqref{data_hyp} of
Theorem~\ref{thm:main} is realized with equality, the norm being divided evenly
between the two fields,
\begin{equation}
\|\mathbf u_0\|_{H^2}=\|\theta_0\|_{H^2}=\tfrac12\varepsilon .
\label{train_norm}
\end{equation}

For each sample we store the initial pair
$(\omega_0,\theta_0)$ and the corresponding final fields
$(\omega(\cdot,T),\theta(\cdot,T))$, all on the $128\times128$ grid, together with
its amplitude and the index of its group, a group being the five samples at one
amplitude that share one initial vorticity. Because the rescaling of $\omega_0$
does not involve $\theta_0$, the members of a group share their initial vorticity
exactly. The \emph{training set} uses ten amplitudes, consecutive ones differing by the
factor $2^{1/2}$,
\begin{equation}
\varepsilon_j=0.025\cdot2^{(2j-1)/4},\qquad j=0,\dots,9,
\label{eps_levels}
\end{equation}
with $1000$ samples each, $200$ initial vorticities each paired with five independent
initial temperatures, $10^4$ samples in all. Two test sets are generated with
independent random seeds. The \emph{training-amplitude} set uses the same ten amplitudes with $100$ samples each, $20$ initial vorticities each paired with five
independent initial temperatures. The \emph{interpolation} set uses the five amplitudes
$\varepsilon\in\{0.025,\,0.05,\,0.1,\,0.2,\,0.4\}$ with $200$ samples each, $40$ initial
vorticities each paired with five independent initial temperatures. Each interpolation amplitude lies a factor $2^{1/4}$ from the two nearest
training amplitudes, inside the trained range. Neither test set enters the loss. The $1.2\times10^4$ simulations
took $33.8$ hours of wall-clock time in all, an average of $10.2$\,s each. Each of
the $25$ amplitudes of the three sets was generated as a separate job, the
jobs running in parallel, so the $10^3$ training samples of one amplitude took between
$2.2$ and $3.1$ hours and the complete data were available after about three
hours.

\subsection{The late-time temperature and why it is not a prediction target}
\label{sec_blade}

\paragraph{Linear prediction.}
By Proposition~\ref{prop:linear},
$|\hat\theta(k_1,k_2,t)|$ with $k_2\neq0$ decays at least like $e^{-\min\{\nu,\eta\}t}$, which is $e^{-100}$ at $T=100$ when $\nu=\eta=1$. When $k_2=0$, $|\hat\theta(k_1,0,t)|$ decays like $e^{-t/(\nu k_1^{2})}$, since $|\lambda_+|=(\nu k_1^2)^{-1}+O\big((\nu k_1^2)^{-3}\big)$ by the formula in Proposition~\ref{prop:linear}(b). Let $E_t(k_1)$ denote the expectation of $|\hat\theta(k_1,0,t)|^2/\varepsilon^2$. Then
\begin{equation}
E_T(k_1)\;\approx\;E_0(k_1)\,e^{-2T/(\nu k_1^{2})},
\quad \text{ when } \nu k_1^2\gg1.
\label{blade_filter}
\end{equation}
For the Gaussian fields of Sec.~\ref{sec_trainingdata}, $E_0(k_1)\propto(\tau^2+k_1^2)^{-\alpha}$ with
$\alpha=3.5$ and $\tau=7$, and the right-hand side of \eqref{blade_filter} when $T=100$ and $\nu=1$ is largest at $k_1^{\ast}=9.4$.

\paragraph{Comparison with the simulations.}
The $2000$ spectral simulations of the two test sets agree with this analysis. At
$T=100$ the temperature of each run is, to a close approximation, a function of
$x_1$ alone, oscillating in $x_1$ (Fig.~\ref{fig:bladefield}). The modes with
$k_2=0$ carry a median $99.9999$ percent of $\|\theta(\cdot,T)\|_{L^2}^2$, against
$5$ percent in the initial data, and the $L^2$ norm of $\theta$ has fallen by a
median factor of about $50$. The wavenumber $|k_1|$ at which
$|\hat\theta(k_1,0,T)|^2$ is largest has median $10$, close to $k_1^{\ast}=9.4$.
At every $k_1$ with $7\le k_1\le20$, a range that carries $90$ percent of
$\sum_{k_1}E_T(k_1)$, the right-hand side of \eqref{blade_filter}, with $E_0$
replaced by its average over the $2000$ runs, differs from the average of
$|\hat\theta(k_1,0,T)|^2/\varepsilon^2$ over the same runs by a relative error of
at most $0.6$ percent. The oscillation of $\theta(\cdot,T)$ in $x_1$ is therefore
not a numerical artifact. It is the part of $\theta_0$ with $k_2=0$, from which
the coupling with $u_2$ has removed the modes with small $|k_1|$, while the decay
of $E_0$ with $|k_1|$ keeps the energy at large $|k_1|$ small.

\begin{figure}[htbp]
\centering
\includegraphics[width=0.55\textwidth]{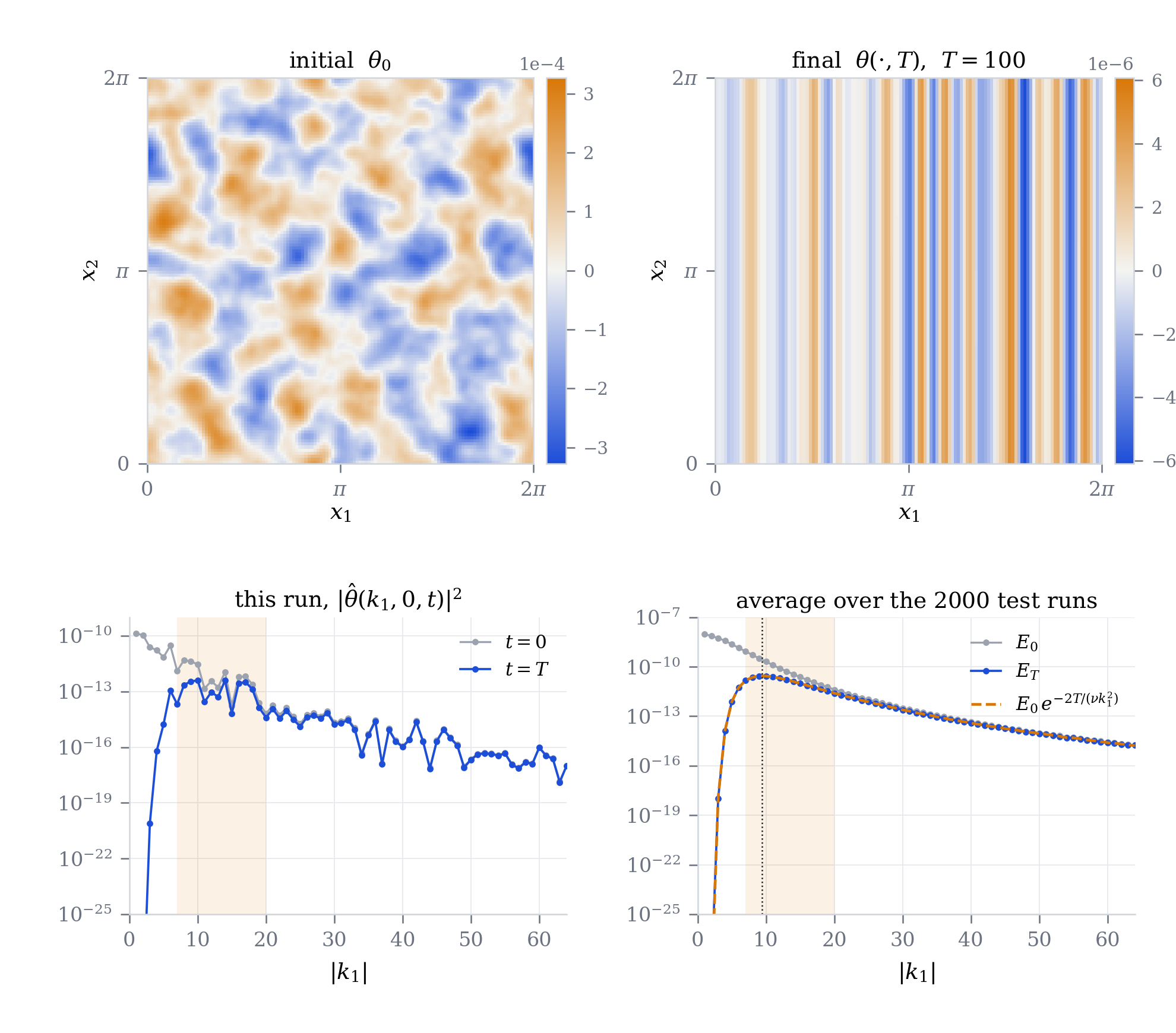}
\caption{Top: the initial temperature and the temperature at $T=100$ of a
representative run of the interpolation set, at $\varepsilon=0.1$ (note the
differing color scales). Bottom left: $|\hat\theta(k_1,0,t)|^2$ of the same run at
$t=0$ and $t=T$. Bottom right: the averages of $|\hat\theta(k_1,0,t)|^2/\varepsilon^2$
over the $2000$ test runs at $t=0$ and $t=T$, which estimate $E_0$ and $E_T$, and the
right-hand side of \eqref{blade_filter} evaluated with the average at $t=0$. The
dotted line marks $k_1^{\ast}=9.4$. In both panels the shaded band is
$7\le|k_1|\le20$.}
\label{fig:bladefield}
\end{figure}

\paragraph{Why the temperature is not a prediction target.}
By \eqref{blade_filter} the map $\theta_0\mapsto\theta(\cdot,T)$ is severely
ill-conditioned. On the modes with $k_2=0$ the measured value of the multiplier,
$(E_T(k_1)/E_0(k_1))^{1/2}$, is $4.8\times10^{-11}$, $1.4\times10^{-5}$,
$1.9\times10^{-3}$, $0.368$, $0.902$ and $1.00$ at $k_1=2,3,4,10,32,64$, the last
because the vertical velocity vanishes at $k_1=N/2=64$, where the coefficients of
the vorticity are set to zero, so that this mode is neither damped nor coupled.
The ratio of its largest to its smallest value exceeds $10^{10}$, and in a
preliminary experiment at a single amplitude a network trained with both
$\omega(\cdot,T)$ and $\theta(\cdot,T)$ as targets did not converge. Moreover, by Theorem~\ref{thm:main}(a) the temperature
decays to zero, and at $T=100$ its median $L^2$ norm is $1.3\times10^{-5}$. The
network therefore predicts only the vorticity, and $\theta_0$ is kept as an input
because it enters the evolution of the vorticity.

\subsection{Network structure and training}
\label{sec_NOtrain}

\paragraph{Network architecture.}
The Fourier neural operator of Sec.~\ref{sec_NOintro} is instantiated in two
spatial dimensions with $L=4$ Fourier layers, $k_{\max}=32$, so that the wavenumbers
$-32\le k_1<32$ and $|k_2|\le32$ are retained, the second direction being
handled by a real Fourier transform, and $64$ hidden channels. The lifting and projection
maps use channel-expansion ratios of $2$ and $4$, respectively. The network has
three input channels and two output channels on the $128\times128$ grid.

The inputs are the rescaled initial fields $\omega_0/\varepsilon$ and
$\theta_0/\varepsilon$, and a constant channel $\log(\varepsilon/0.1)$ that encodes
the amplitude. Each channel is standardized with one mean and one
standard deviation computed over the training set.

The outputs are not the final vorticity itself. Theorem~\ref{thm:main}(b) says that
the horizontal average of the vorticity stays within $C\varepsilon^2$ of
$\bar\omega_0$ at every time. On the test sets
the change of the horizontal average,
\begin{equation}
\delta\bar\omega:=\bar\omega(\cdot,T)-\bar\omega_0 ,
\label{dwbar_def}
\end{equation}
is a median $5.4\times10^{-5}$
to $1.2\times10^{-3}$ of $\bar\omega_0$ in $L^2$,
growing with $\varepsilon$. A network that predicts $\omega(\cdot,T)$ whole would have
to represent a correction three to four orders of magnitude below the field it
outputs. We therefore give the surrogate the structure of the theorem,
\begin{equation}
\omega^{\rm NO}(\cdot,T)=\bar\omega_0
+s_m(\varepsilon)\,m^{\rm NO}
+s_q(\varepsilon)\,q^{\rm NO} ,
\label{modelA_head}
\end{equation}
where $\bar\omega_0$ is computed exactly from the input $\omega_0$ and the
superscript NO marks an output of the neural operator. The network has two output
channels. The first, with its $x_1$-average removed, is $q^{\rm NO}$, and the
$x_1$-average of the second is $m^{\rm NO}(x_2)$. The remaining parts of the two
channels do not enter \eqref{modelA_head}. Thus $s_m\,m^{\rm NO}$ and
$s_q\,q^{\rm NO}$ approximate the two parts of $\omega(\cdot,T)-\bar\omega_0$ in
the mean/oscillation decomposition of Sec.~\ref{sec_meanosc}, $\delta\bar\omega$
and $\widetilde\omega(\cdot,T)$, respectively.

For each training amplitude $\varepsilon_j$, the scales $s_q(\varepsilon_j)$ and
$s_m(\varepsilon_j)$ are the root-mean-square values of $\widetilde\omega(\cdot,T)$
and $\delta\bar\omega$ over the grid points and the $n_j=1000$ training samples at
that amplitude,
\begin{equation}
s_q(\varepsilon_j)=\Big(\frac1{n_j}\sum_{i=1}^{n_j}\frac{\|\widetilde\omega_i(\cdot,T)\|_{L^2}^2}{4\pi^2}\Big)^{1/2},
\qquad
s_m(\varepsilon_j)=\Big(\frac1{n_j}\sum_{i=1}^{n_j}\frac{\|\delta\bar\omega_i\|_{L^2}^2}{4\pi^2}\Big)^{1/2}.
\label{scales_def}
\end{equation}
Between the training amplitudes, $\log s_q$ and $\log s_m$ are interpolated
linearly in $\log\varepsilon$. No power of $\varepsilon$ is imposed, but over the
training amplitudes $s_m$ grows as $\varepsilon^{2.00}$, with $s_m/\varepsilon^2$
between $5.7\times10^{-6}$ and $6.6\times10^{-6}$, and $s_q$ as $\varepsilon^{1.00}$.
Because the scales carry the growth of the targets with $\varepsilon$, the
network outputs $m^{\rm NO}$ and $q^{\rm NO}$ are of order one at every amplitude
and depend mainly on the normalized data $\omega_0/\varepsilon$ and
$\theta_0/\varepsilon$.

\paragraph{Training process.}
The training set is the $10^4$ samples of Sec.~\ref{sec_trainingdata}. An epoch is
one pass through the training set, and a batch is the set of $B$ samples used in one
gradient update. The
loss is the sum of the relative $L^2$ errors of the two outputs, each per sample and
averaged over the batch of size $B$,
\begin{equation}
\mathcal L=\frac1B\sum_{i=1}^{B}
\frac{\|s_q\,q^{\rm NO}_i-\widetilde\omega_i(\cdot,T)\|_{L^2}}{\|\widetilde\omega_i(\cdot,T)\|_{L^2}}
+\gamma\,\frac1B\sum_{i=1}^{B}
\frac{\|s_m\,m^{\rm NO}_i-\delta\bar\omega_i\|_{L^2}}{\|\delta\bar\omega_i\|_{L^2}},
\qquad \gamma=10.
\label{modelA_loss}
\end{equation}
The weight $\gamma$
emphasizes $\delta\bar\omega$, the part on which Theorem~\ref{thm:main}(b)
bears. Two preliminary runs on $900$ samples, with $\gamma=10$ and $\gamma=100$, showed that
the larger weight did not improve $\delta\bar\omega$ and left the oscillation output unlearned, its relative error staying at $1$, so
$\gamma=10$ is used. Optimization uses AdamW \cite{Loshchilov2019} with learning rate $3\times10^{-4}$,
weight decay $10^{-4}$ and batch size $B=8$, the learning rate being halved every
$200$ epochs. The network is trained for $600$ epochs, $7.5\times10^{5}$ gradient
updates of $3.47\times10^{7}$ parameters (complex weights counted once), on a
single NVIDIA Tesla V100S ($32$\,GB) GPU, in $10$\,h\,$59$\,min of wall-clock
time, $66$\,s per epoch.

\subsection{Surrogate accuracy and cost}
\label{sec_no_accuracy}

\paragraph{Accuracy.}
On the $2\times10^3$ test samples the surrogate \eqref{modelA_head} reproduces the
final vorticity $\omega(\cdot,T)$ to a median relative $L^2$ error between
$3.8\times10^{-5}$ and $2.4\times10^{-4}$ at the five interpolation amplitudes,
and between $4.0\times10^{-5}$ and $3.3\times10^{-4}$ at the ten training
amplitudes, the error growing with the amplitude.  Figure~\ref{fig:no_convergence} shows the training history and
Fig.~\ref{fig:no_prediction} three test samples, the best, the median and the
worst.

The field is accurate because most of it is $\bar\omega_0$, which the surrogate
adds exactly. The two learned parts are reproduced less accurately. The median
relative $L^2$ error is $4.0$ to $4.7$ percent for the oscillation
$\widetilde\omega(\cdot,T)$ and $23$ to $27$ percent for $\delta\bar\omega$, nearly
the same at every amplitude. Both parts are small compared with
$\omega(\cdot,T)$, so these errors barely affect the field.

The error of $\delta\bar\omega$, $s_m\,m^{\rm NO}-\delta\bar\omega$, lies at fine
vertical scales. Of its squared $L^2$ norm, $95$ to $99$ percent lies at
$|k_2|>16$, against $22$ to $26$ percent for $\delta\bar\omega$ itself, so the
network reproduces the large-scale profile of $\delta\bar\omega$ and misses its
fine structure. On the training samples this error is far smaller, below
$10^{-2}$ from epoch $201$ on and $2.4\times10^{-3}$ at the end, so the fine
structure is learned for the training profiles but does not carry over to new
ones. The $H^{-2}$ norm of Theorem~\ref{thm:main}(b) weights high wavenumbers
weakly. In this norm the median relative error of $\delta\bar\omega$ is $6$ to $10$
percent at every amplitude, and $\|\delta\bar\omega\|_{H^{-2}}$, which
Theorem~\ref{thm:main}(b) bounds by $C\varepsilon^2$ at every time, is reproduced
to a median relative error of $2.2$ percent (Sec.~\ref{sec_thm2_test}).

\begin{figure}[htbp]
\centering
\includegraphics[width=0.48\textwidth]{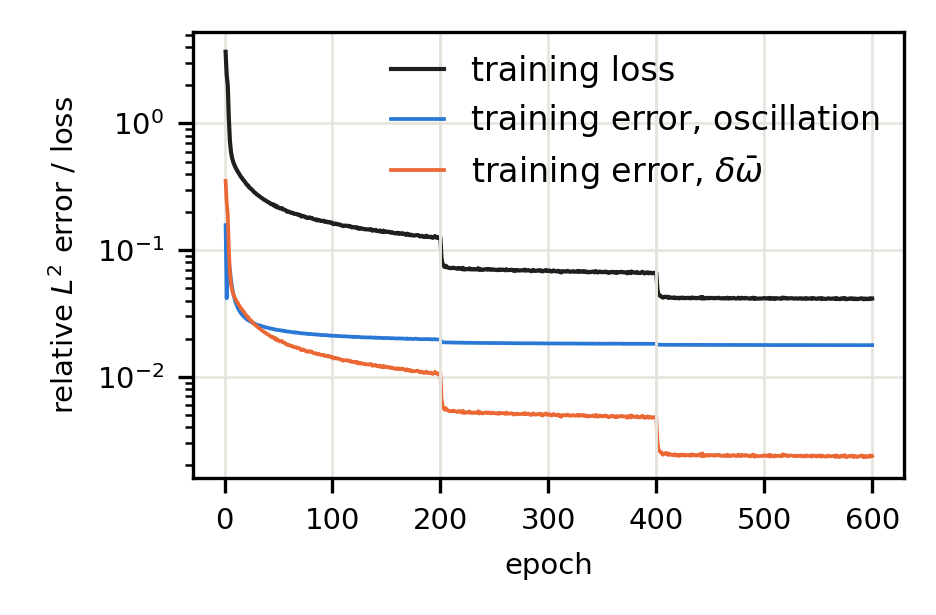}
\caption{Training history of the Fourier neural operator, versus epoch. Each
curve is the average over the batches of each epoch. Black:
the loss \eqref{modelA_loss}. Blue: the first term in \eqref{modelA_loss}, the
relative $L^2$ error of the oscillation on the training set. Orange: the second term
in \eqref{modelA_loss} without $\gamma$, the relative $L^2$ error of
$\delta\bar\omega$ on the training set. The learning rate is halved at epochs
$200$ and $400$.}
\label{fig:no_convergence}
\end{figure}

\begin{figure}[htbp]
\centering
\includegraphics[width=\textwidth]{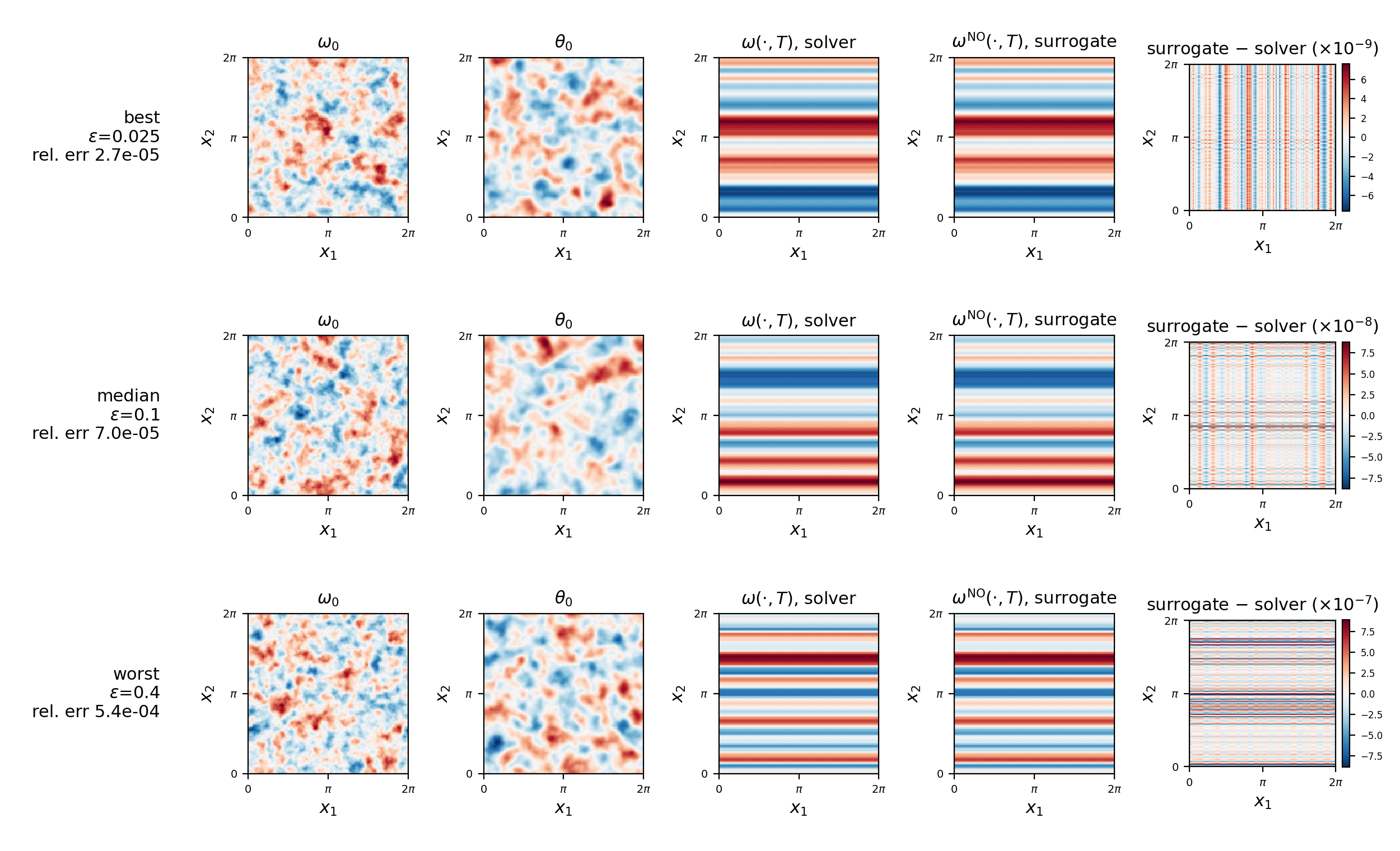}
\caption{The surrogate on the three samples of the interpolation set with the
smallest, median and largest relative $L^2$ error of $\omega(\cdot,T)$ among the
$10^{3}$ samples of that set ($2.7\times10^{-5}$, $7.0\times10^{-5}$ and $5.4\times10^{-4}$). Columns: the two input fields
$\omega_0$ and $\theta_0$, the spectral-solver final vorticity $\omega(\cdot,T)$,
the prediction $\omega^{\rm NO}(\cdot,T)$ of \eqref{modelA_head}, and the
pointwise error on its own scale. Fields are
drawn with $x_1$ horizontal. The final vorticity is banded horizontally, that is,
nearly independent of $x_1$. In the best case the error varies with $x_1$ and lies
in the oscillation. In the worst case it is banded horizontally and lies in the
fine structure of the stratified profile.}
\label{fig:no_prediction}
\end{figure}

\paragraph{Inference cost.}
Once trained, the surrogate is far cheaper to evaluate than the spectral solver
that generated its data, which costs $10.2$\,s per simulation on average
(Sec.~\ref{sec_trainingdata}).
On one NVIDIA Tesla V100S ($32$\,GB), a batch of $16$ initial fields is mapped to
the final vorticity \eqref{modelA_head}, including the computation of the inputs
and of \eqref{modelA_head}, in
$21.3$\,ms, that is, $1.33$\,ms per field, and a batch of $64$ in $73.6$\,ms,
$1.15$\,ms per field. Against the $10.2$\,s of the solver, at the batch of $16$, the surrogate is about $8\times10^{3}$ times faster per field.

\subsection{The quantity of Theorem~\ref{thm:main}(b): solver and surrogate}
\label{sec_thm2_test}

Theorem~\ref{thm:main}(b) bounds $\|\bar\omega(\cdot,t)-\bar\omega_0\|_{H^{-2}}$ at
every time by $C\varepsilon^2$,
with $C$ independent of the data, in the norm \eqref{norm_def}. For the whole field
the bound is weaker. Writing $\omega-\bar\omega_0=(\bar\omega-\bar\omega_0)+\widetilde\omega$
and using $\|\widetilde\omega\|_{H^{-2}}\le\|\widetilde\omega\|_{L^2}$, $\|\widetilde\omega\|_{L^2}\le\|\omega\|_{L^2}$ and $\|\omega\|_{L^2}\le\|u\|_{H^2}$
(Lemma~\ref{lem:meanosc}(c)) together with \eqref{V0},
\begin{equation}
\|\omega(\cdot,t)-\bar\omega_0\|_{H^{-2}}\le C\varepsilon^2+C_*\varepsilon
\qquad\text{for every } t\ge0,
\label{full_bound}
\end{equation}
which is of order $\varepsilon$ because the oscillation $\widetilde\omega$ is. For a sample $i$ at the final
time $T=100$ we define
\begin{equation}
\bar R_i=\frac{\|\bar\omega_i(\cdot,T)-\bar\omega_{0,i}\|_{H^{-2}}}{\varepsilon_i^{2}},
\qquad
R_i=\frac{\|\omega_i(\cdot,T)-\bar\omega_{0,i}\|_{H^{-2}}}{\varepsilon_i},
\label{R_def}
\end{equation}
and by Theorem~\ref{thm:main}(b), $\bar R_i\le C$ for every sample, while by
\eqref{full_bound} $R_i\le C\varepsilon_i+C_*$.

Table~\ref{tab:thm2} is built as follows. For each of the $2\times10^3$ test
samples, the norms in \eqref{R_def} are evaluated with the discrete Fourier
coefficients on the $128\times128$ grid, once with the solver's $\omega_i(\cdot,T)$,
giving $\bar R_i$ and $R_i$, and once with the surrogate's prediction
\eqref{modelA_head} in its place, giving $\bar R^{\rm NO}_i$ and $R^{\rm NO}_i$. At
each amplitude the table lists the sample mean, the standard deviation and the
maximum of $\bar R$, the sample mean and the maximum of $\bar R^{\rm NO}$, the
median and the maximum of
$|\bar R^{\rm NO}_i-\bar R_i|/\bar R_i$, the sample mean of $R$ and the median of
$|R^{\rm NO}_i-R_i|/R_i$. Figure~\ref{fig:thm2} shows the same data for $\bar R$. Panel (a)
plots $\bar R_i$ of every solver run against its amplitude, together with the
sample mean and the maximum at each amplitude for the solver and for the
surrogate, and panel (b) plots $\bar R^{\rm NO}_i$ against $\bar R_i$ for every
sample.

\begin{table}[htbp]
\caption{The quantities of \eqref{R_def} at $T=100$ on the $2\times10^3$ test
samples, from the spectral solver and from the surrogate. $\bar R$ is bounded by
Theorem~\ref{thm:main}(b) at every time. The interpolation amplitudes (top) were
not used in training. The columns $|\bar R^{\rm NO}-\bar R|/\bar R$ give the median
and the maximum over the amplitude of the per-sample
relative difference, and the last two columns the sample mean of $R$ and the median
of its relative difference. In the column set, train.\ marks the training
amplitudes and interp.\ the interpolation amplitudes. Standard deviations use
$n-1$.}
\label{tab:thm2}
\centering
\scriptsize
\setlength{\tabcolsep}{2.5pt}
\begin{tabular}{llrccccccccc}
\toprule
& & & \multicolumn{3}{c}{solver, $\bar R\times10^{6}$}
& \multicolumn{2}{c}{surrogate, $\bar R^{\rm NO}\times10^{6}$}
& \multicolumn{2}{c}{$|\bar R^{\rm NO}-\bar R|/\bar R$ (\%)}
& $R\times10^{7}$ & $\frac{|R^{\rm NO}-R|}{R}$ (\%)\\
\cmidrule(lr){4-6}\cmidrule(lr){7-8}\cmidrule(lr){9-10}\cmidrule(lr){11-11}\cmidrule(lr){12-12}
$\varepsilon$ & set & $n$ & mean & s.d. & max & mean & max & median & max & mean & median\\
\midrule
$0.025$ & interp. & $200$ & $1.43$ & $0.77$ & $4.24$ & $1.38$ & $4.05$ & $3.6$ & $8.0$ & $1.92$ & $0.7$\\
$0.050$ & interp. & $200$ & $1.44$ & $0.67$ & $3.82$ & $1.46$ & $3.78$ & $3.2$ & $8.2$ & $2.02$ & $0.6$\\
$0.100$ & interp. & $200$ & $1.41$ & $0.52$ & $3.13$ & $1.39$ & $3.06$ & $1.5$ & $5.3$ & $2.36$ & $0.7$\\
$0.200$ & interp. & $200$ & $1.46$ & $0.62$ & $4.08$ & $1.43$ & $3.86$ & $2.0$ & $8.2$ & $3.56$ & $1.1$\\
$0.400$ & interp. & $200$ & $1.43$ & $0.70$ & $4.22$ & $1.42$ & $3.98$ & $1.6$ & $7.3$ & $6.07$ & $1.3$\\
\midrule
$0.0210$ & train. & $100$ & $1.54$ & $0.56$ & $2.85$ & $1.53$ & $2.68$ & $1.9$ & $6.8$ & $1.85$ & $0.8$\\
$0.0297$ & train. & $100$ & $1.55$ & $0.67$ & $3.77$ & $1.45$ & $3.47$ & $6.6$ & $10.9$ & $2.00$ & $0.8$\\
$0.0420$ & train. & $100$ & $1.36$ & $0.65$ & $3.26$ & $1.43$ & $3.19$ & $6.0$ & $11.3$ & $1.93$ & $0.8$\\
$0.0595$ & train. & $100$ & $1.35$ & $0.66$ & $2.97$ & $1.34$ & $2.91$ & $1.7$ & $8.5$ & $2.10$ & $0.7$\\
$0.0841$ & train. & $100$ & $1.48$ & $0.59$ & $3.25$ & $1.44$ & $3.05$ & $1.8$ & $6.4$ & $2.30$ & $0.9$\\
$0.1189$ & train. & $100$ & $1.38$ & $0.55$ & $2.93$ & $1.40$ & $2.91$ & $2.0$ & $6.5$ & $2.63$ & $0.7$\\
$0.1682$ & train. & $100$ & $1.40$ & $0.56$ & $2.83$ & $1.39$ & $2.67$ & $1.3$ & $6.5$ & $3.08$ & $0.8$\\
$0.2378$ & train. & $100$ & $1.46$ & $0.52$ & $2.47$ & $1.43$ & $2.40$ & $2.0$ & $8.2$ & $4.02$ & $1.6$\\
$0.3364$ & train. & $100$ & $1.33$ & $0.57$ & $2.45$ & $1.29$ & $2.39$ & $3.2$ & $7.9$ & $4.92$ & $2.7$\\
$0.4757$ & train. & $100$ & $1.49$ & $0.56$ & $2.61$ & $1.51$ & $2.63$ & $1.4$ & $4.5$ & $7.39$ & $1.3$\\
\bottomrule
\end{tabular}
\end{table}

\begin{figure}[htbp]
\centering
\includegraphics[width=\textwidth]{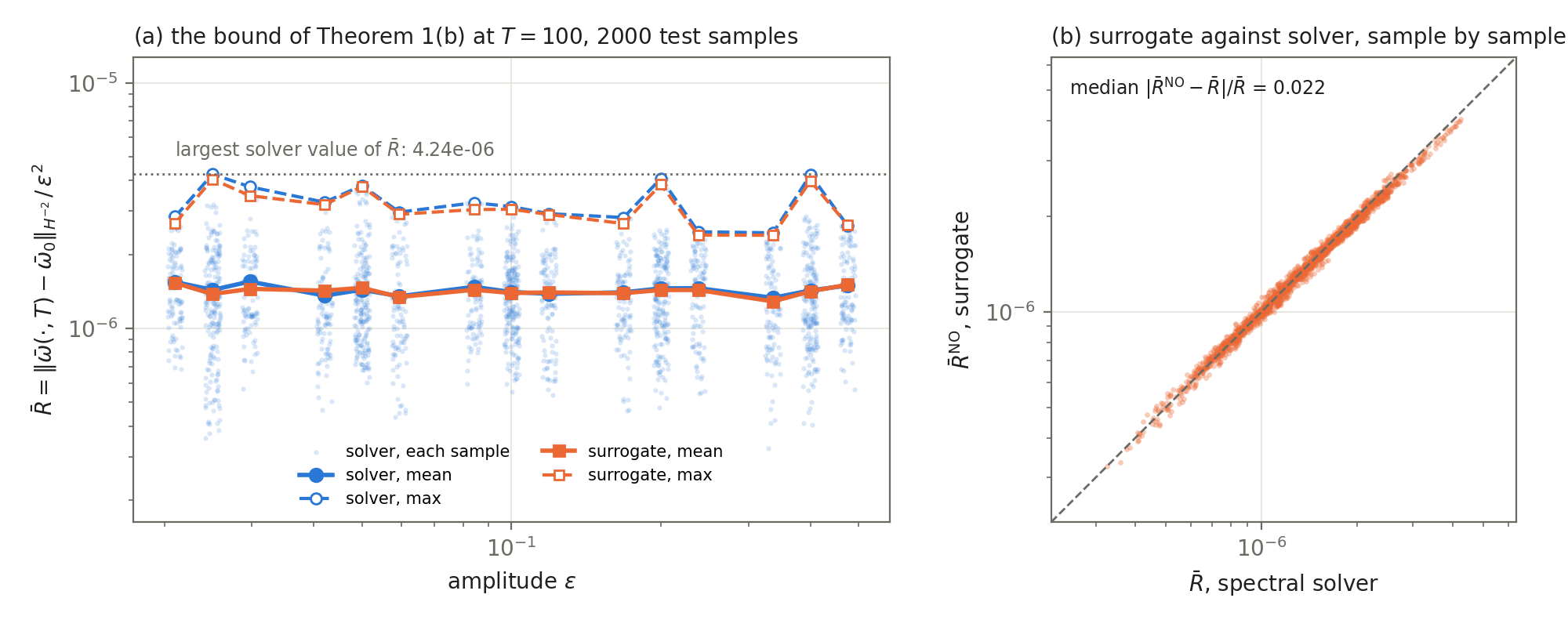}
\caption{(a) $\bar R=\|\bar\omega(\cdot,T)-\bar\omega_0\|_{H^{-2}}/\varepsilon^{2}$ at
$T=100$ for every test sample of the spectral solver (dots, spread slightly in
$\varepsilon$ for visibility), with the sample mean (solid line, filled markers)
and the maximum (dashed line, open markers) at each amplitude for the solver
(circles) and the surrogate (squares). The dotted line is the largest of the
$2\times10^3$ solver values. Both the sample mean and the maximum stay nearly the
same over a factor of $23$ in $\varepsilon$. (b) The surrogate's $\bar R^{\rm NO}$
against the solver's $\bar R$, sample by sample, with the line of equality. The
median printed in the panel is over all $2\times10^3$ samples.}
\label{fig:thm2}
\end{figure}

Table~\ref{tab:thm2} and Fig.~\ref{fig:thm2} show four things. First, $\bar R$
stays nearly the same as $\varepsilon$ varies. Its sample mean lies between
$1.33\times10^{-6}$ and $1.55\times10^{-6}$ at every amplitude from $0.021$ to
$0.476$, and a least-squares fit of the sample mean of
$\|\bar\omega(\cdot,T)-\bar\omega_0\|_{H^{-2}}$ to $c\,\varepsilon^{p}$ over all
$15$ amplitudes gives $p=1.99$, with $95$ percent bootstrap interval $[1.95,\,2.03]$. The maximum also stays nearly the same: it lies
between $2.45\times10^{-6}$ and $4.24\times10^{-6}$. All
$2\times10^3$ runs therefore satisfy
\[
\|\bar\omega(\cdot,T)-\bar\omega_0\|_{H^{-2}}\le4.25\times10^{-6}\,\varepsilon^{2},
\]
which is the form of Theorem~\ref{thm:main}(b) with an empirical constant, and over
the range tested nothing in the data suggests that the constant grows as
$\varepsilon$ decreases. All the samples come from one family of initial data,
Gaussian random fields of fixed covariance rescaled to the amplitude $\varepsilon$,
so the test shows that for this family the change is of second order with a
bounded ratio, and the
empirical constant depends on the covariance of the initial data.
Section~\ref{sec_search} shows that initial data of other shapes give a ratio
more than four times larger.

Second, at a fixed amplitude $\bar R$ varies from sample to sample. Its standard
deviation in Table~\ref{tab:thm2} is $0.36$ to $0.54$ times its sample mean. This
variation comes from the shapes of the initial data, $\omega_0/\varepsilon$ and
$\theta_0/\varepsilon$, which differ from sample to sample, and not from the
amplitude, since the relative spread is about the same at every $\varepsilon$.

Third, the surrogate's $\bar R^{\rm NO}_i$ is close to the solver's $\bar R_i$
in each individual run. At every amplitude the sample mean of $\bar R^{\rm NO}$
agrees with that of $\bar R$ to within $7$ percent, and its maximum and standard
deviation to within $8$ percent. Over all $2\times10^3$ test runs, the median of
$|\bar R^{\rm NO}_i-\bar R_i|/\bar R_i$ is $2.2$ percent, $84$ percent of the runs
have it below $5$ percent, and every run below $12$ percent. The largest value of
$\bar R^{\rm NO}$, $4.05\times10^{-6}$, is $5$ percent below the largest value of
$\bar R$. At every amplitude the median of $|\bar R^{\rm NO}_i-\bar R_i|/\bar R_i$
lies between $1.3$ and $6.6$ percent.

Fourth, $R$ behaves differently. Its sample mean is nearly the same, between
$1.8\times10^{-7}$ and $2.1\times10^{-7}$, for $\varepsilon\le0.06$, and rises to
$7.4\times10^{-7}$ at $\varepsilon=0.476$. To see why, write
\[
\omega(\cdot,T)-\bar\omega_0=\delta\bar\omega+\widetilde\omega(\cdot,T),
\]
where $\|\delta\bar\omega\|_{H^{-2}}=\varepsilon^2\bar R$ is of order
$\varepsilon^2$. Theorem~\ref{thm:main}(b) drives the oscillation $\widetilde\omega$
to zero, but at $T=100$ it is still present, because it is tied to the slowly
decaying temperature modes with $k_2=0$ of Sec.~\ref{sec_blade}. In every run at
least $99.99$ percent of $\|\widetilde\omega(\cdot,T)\|_{H^{-2}}^2$ lies on the
modes with $k_2=0$, and on these modes the equation
$\hat\omega(k_1,0,T)=i\hat\theta(k_1,0,T)/(\nu k_1)$ holds to within $0.15$ percent in
relative $H^{-2}$ norm, in the sense that
$\|Q_0\omega(\cdot,T)+\nu^{-1}\partial_1^{-1}Q_0\theta(\cdot,T)\|_{H^{-2}}$ is at most
$0.0015\,\|Q_0\omega(\cdot,T)\|_{H^{-2}}$, with $Q_0$ the vertical average \eqref{vert_avg} and $\partial_1^{-1}$ acting on the
modes with $k_1\neq0$. This equation is the balance $\nu\partial_{11}\omega+\partial_1\theta=0$
of \eqref{vort_omega}. The $H^{-2}$ norm of $\widetilde\omega(\cdot,T)$ lies between
$1.8\times10^{-7}\varepsilon$ and $2.0\times10^{-7}\varepsilon$ at every amplitude,
so the oscillation is of first order and keeps $R$ nearly constant for small
$\varepsilon$. At larger $\varepsilon$ the term $\delta\bar\omega$, of order
$\varepsilon^2$, becomes comparable, and $R$ rises. The surrogate's $R^{\rm NO}$,
which contains its prediction of the oscillation, matches $R$ to a median of $0.9$
percent and in every sample to within $7$ percent.

\subsection{Comparison with Fourier neural operators trained without the analysis}
\label{sec_baseline}

To isolate the effect of the design, two further networks were trained that
differ from the surrogate only in what the analysis decides. They use the same
$10^4$ training runs, the same Fourier neural operator with four layers, $64$
channels and the same retained modes, and the same optimizer, batch size,
learning-rate schedule, number of epochs and seed. Each predicts
$\omega(\cdot,T)$ whole, with one output channel and the loss
\begin{equation}
\mathcal L_{\rm base}=\frac1B\sum_{i=1}^{B}
\frac{\|\omega^{\rm NO}_i(\cdot,T)-\omega_i(\cdot,T)\|_{L^2}}{\|\omega_i(\cdot,T)\|_{L^2}},
\label{base_loss}
\end{equation}
the relative $L^2$ error commonly used for Fourier neural operators
\cite{Li2021FNO}. Neither network adds $\bar\omega_0$ exactly, splits the output
into $\delta\bar\omega$ and the oscillation, or scales the two parts. In the first
network, called plain, the inputs are $\omega_0$ and $\theta_0$, standardized per
channel, and the output is $\omega(\cdot,T)$, standardized by its training mean
and standard deviation, so the amplitude enters only through the size of the
fields. The second network, called scaled, receives the same inputs as the
surrogate, $\omega_0/\varepsilon$, $\theta_0/\varepsilon$ and
$\log(\varepsilon/0.1)$, and predicts $\omega(\cdot,T)/\varepsilon$. It isolates the
effect of the decomposition and of the separate scaling of the two parts.

\begin{table}[htbp]
\caption{The surrogate and two Fourier neural operators trained on the same data
with the loss \eqref{base_loss}, on the $10^3$ runs of each test set. The growth
exponent is the least-squares slope of the logarithm of the mean of
$\|\delta\bar\omega\|_{H^{-2}}$ over each amplitude against $\log\varepsilon$. All
values were computed on an NVIDIA V100S, the GPU used for training.}
\label{tab:baseline}
\centering
\footnotesize
\setlength{\tabcolsep}{5pt}
\begin{tabular}{lccc}
\toprule
& surrogate & plain & scaled\\
\midrule
\multicolumn{4}{l}{\emph{interpolation set}}\\
median $|\bar R^{\rm NO}-\bar R|/\bar R$ & $2.2\%$ & $3.6\times10^{3}\%$ & $2.9\times10^{3}\%$\\
largest $|\bar R^{\rm NO}-\bar R|/\bar R$ & $8.2\%$ & $8.1\times10^{4}\%$ & $9.1\times10^{4}\%$\\
median relative $L^2$ error of $\omega(\cdot,T)$ & $7.0\times10^{-5}$ & $2.8\times10^{-2}$ & $2.8\times10^{-2}$\\
growth exponent of $\|\delta\bar\omega\|_{H^{-2}}$ (solver $2.00$) & $2.00$ & $1.17$ & $0.95$\\
\midrule
\multicolumn{4}{l}{\emph{training-amplitude set}}\\
median $|\bar R^{\rm NO}-\bar R|/\bar R$ & $2.2\%$ & $3.6\times10^{3}\%$ & $3.0\times10^{3}\%$\\
largest $|\bar R^{\rm NO}-\bar R|/\bar R$ & $11\%$ & $4.1\times10^{4}\%$ & $5.0\times10^{4}\%$\\
median relative $L^2$ error of $\omega(\cdot,T)$ & $7.4\times10^{-5}$ & $3.0\times10^{-2}$ & $3.0\times10^{-2}$\\
growth exponent of $\|\delta\bar\omega\|_{H^{-2}}$ (solver $1.98$) & $1.99$ & $1.18$ & $0.98$\\
\bottomrule
\end{tabular}
\end{table}

Table~\ref{tab:baseline} shows the result. Both networks fit the training
fields closely, to a relative $L^2$ error below $10^{-3}$, and reproduce the
field of the test runs to about $3$ percent. Their median error in
$\delta\bar\omega$, however, is $30$ to $37$ times the size of $\delta\bar\omega$
itself, so the change of the average that they predict is determined by their
error in the field and not by the flow. The values of $\bar R$ they produce are,
on average over each amplitude, $6$ to $180$ times too large, most at the
smallest amplitudes, and the predicted change of the average grows as
$\varepsilon^{0.95}$ to $\varepsilon^{0.98}$ for the scaled network and
$\varepsilon^{1.17}$ to $\varepsilon^{1.18}$ for the plain one, against the second-order growth of Theorem~\ref{thm:main}(b), which the
solver confirms with the exponent $2.00$ and the surrogate reproduces. The field
itself is also predicted far less accurately, because the surrogate adds
$\bar\omega_0$, which makes up most of $\omega(\cdot,T)$, exactly.

\subsection{Surrogate investigation of a lower bound for the constant of Theorem~\ref{thm:main}(b)}
\label{sec_search}

Theorem~\ref{thm:main}(b) bounds $\bar R$ by a constant $C$ that depends only on
$\nu$, but gives no value for $C$, and the $2\times10^3$ solver runs of
Sec.~\ref{sec_thm2_test} only show that $C\ge4.24\times10^{-6}$. The surrogate
evaluates $\bar R^{\rm NO}$ in about $1$\,ms per sample on one GPU, against
$10.2$\,s per simulation for the solver. It is also differentiable: the gradient of
$\bar R^{\rm NO}$ with respect to the initial fields $(\omega_0,\theta_0)$ is
obtained by automatic differentiation at the cost of about one further
evaluation, whereas the solver would need an adjoint computation backward in time
from $T$ to $0$. We use the speed to screen many initial fields and the gradient to
start an ascent from the screened fields with the largest $\bar R^{\rm NO}$, in
search of initial fields with a larger $\bar R$. The search is carried out separately at each of the five interpolation
amplitudes, and the amplitude is held fixed. Only the shape of the fields
changes, and their norms stay $\|u_0\|_{H^2}=\|\theta_0\|_{H^2}=\varepsilon/2$.
Since $\bar R$ is divided by $\varepsilon^2$ and Theorem~\ref{thm:main}(b) bounds it
by the same constant at every amplitude, the values found at the different
amplitudes are compared with one another. Every value of $\bar R$ reported in this subsection is recomputed with
the spectral solver.

First, the surrogate evaluated $\bar R^{\rm NO}$ for $10^6$ new samples of the
family of Sec.~\ref{sec_trainingdata} at each of the five interpolation
amplitudes, $5\times10^6$ samples in all, in $83$ minutes of GPU time. The
solver would need about $1.6$ years for the same task. At each amplitude, the
average of $\bar R^{\rm NO}$ over the $10^6$ samples lies between
$1.36\times10^{-6}$ and $1.44\times10^{-6}$, close to the averages of the solver's
$\bar R$ in Table~\ref{tab:thm2}.
However, at each amplitude a few hundred of the $10^6$ samples have
$\bar R^{\rm NO}>4.25\times10^{-6}$, a value that no test sample reaches. The
largest value of $\bar R^{\rm NO}$ over all $5\times10^6$ samples is
$7.34\times10^{-6}$. The solver,
run on the $16$ samples with the largest $\bar R^{\rm NO}$ at each amplitude,
gives $\bar R>4.25\times10^{-6}$ for all $80$ of them. The largest value,
$\bar R=8.32\times10^{-6}$, comes from the same sample that gives the largest
$\bar R^{\rm NO}$.

Second, starting from the $8$ samples with the largest $\bar R^{\rm NO}$ at each
amplitude, $40$ starting samples in all, each kept at its own amplitude, we maximized $\log\bar R^{\rm NO}$ by gradient ascent with the Adam
method, step size $0.02$ and $2000$ steps. The variables are the complex Gaussian
coefficients $\xi^\omega_k$ and $\xi^\theta_k$, whose real and imaginary parts are
independent standard normal variables at the start, that define the initial data
through
\[
\hat\omega_0(k)=c_\omega\,(\tau^2+|k|^2)^{-\alpha_\omega/2}\,\xi^\omega_k,
\qquad
\hat\theta_0(k)=c_\theta\,(\tau^2+|k|^2)^{-\alpha_\theta/2}\,\xi^\theta_k,
\]
with $\alpha_\omega=2.5$, $\alpha_\theta=3.5$ and $\tau=7$ as in Sec.~\ref{sec_trainingdata}, where the
constants $c_\omega,c_\theta>0$ are recomputed after every step so that
$\|u_0\|_{H^2}=\|\theta_0\|_{H^2}=\varepsilon/2$. After every step the
Gaussian coefficients are also rescaled so that, for each $s\ge1$, the sums of
$|\hat\omega_0(k)|^2$ and of $|\hat\theta_0(k)|^2$ over the annulus
$s-\tfrac12<|k|\le s+\tfrac12$ keep their initial values, up to the common
factors $c_\omega^2$ and $c_\theta^2$. Only the phases of the Fourier coefficients
and the distribution of their moduli among the wavenumbers of one annulus
change. This keeps the initial data in the class
on which the surrogate was trained, which is meant to keep its predictions
reliable along the ascent, as in model-based optimization restricted to the region where the
model is trusted \cite{Trabucco2021}. Fields with a prescribed power spectrum and
modified phases are used in the same way to separate the role of the phases from
that of the spectrum \cite{Theiler1992}.

The ascent gives $40$ paths in the space of initial data, one from each starting
sample. We ran the solver on the $16$ paths with the largest $\bar R^{\rm NO}$ at
step $2000$, at steps $0$, $100$, $300$, $500$, $1000$, $1500$ and $2000$ of each
path (Fig.~\ref{fig:search}(a)). Up to step $300$ the surrogate
agrees with the solver to a median relative difference of about $10$ percent, and
the solver's $\bar R$ first grows along every path, to a largest value
$\bar R=1.87\times10^{-5}$ at step $500$, $4.4$ times the largest value of the
test set. Recomputed on a $256\times256$ grid, or with half the time
step, this largest value changes by less than $10^{-5}$ in relative terms. Further on, the surrogate overestimates $\bar R$, by a median of $50$
percent at step $2000$, the ascent follows its errors, and the solver's $\bar R$
decreases. Figure~\ref{fig:search}(b)--(e) shows the case with the largest
difference, $\bar R^{\rm NO}=1.31\times10^{-5}$ against $\bar R=5.9\times10^{-6}$.

Since Theorem~\ref{thm:main}(b) bounds $\bar R$ at every time, the largest value
of $\bar R$ found over all the amplitudes tested, $\varepsilon$ from $0.025$ to
$0.4$, gives $C\ge1.87\times10^{-5}$ for $\nu=\eta=1$, provided these amplitudes
lie below the threshold $\varepsilon_*$.

\begin{figure}[htbp]
\centering
\includegraphics[width=\textwidth]{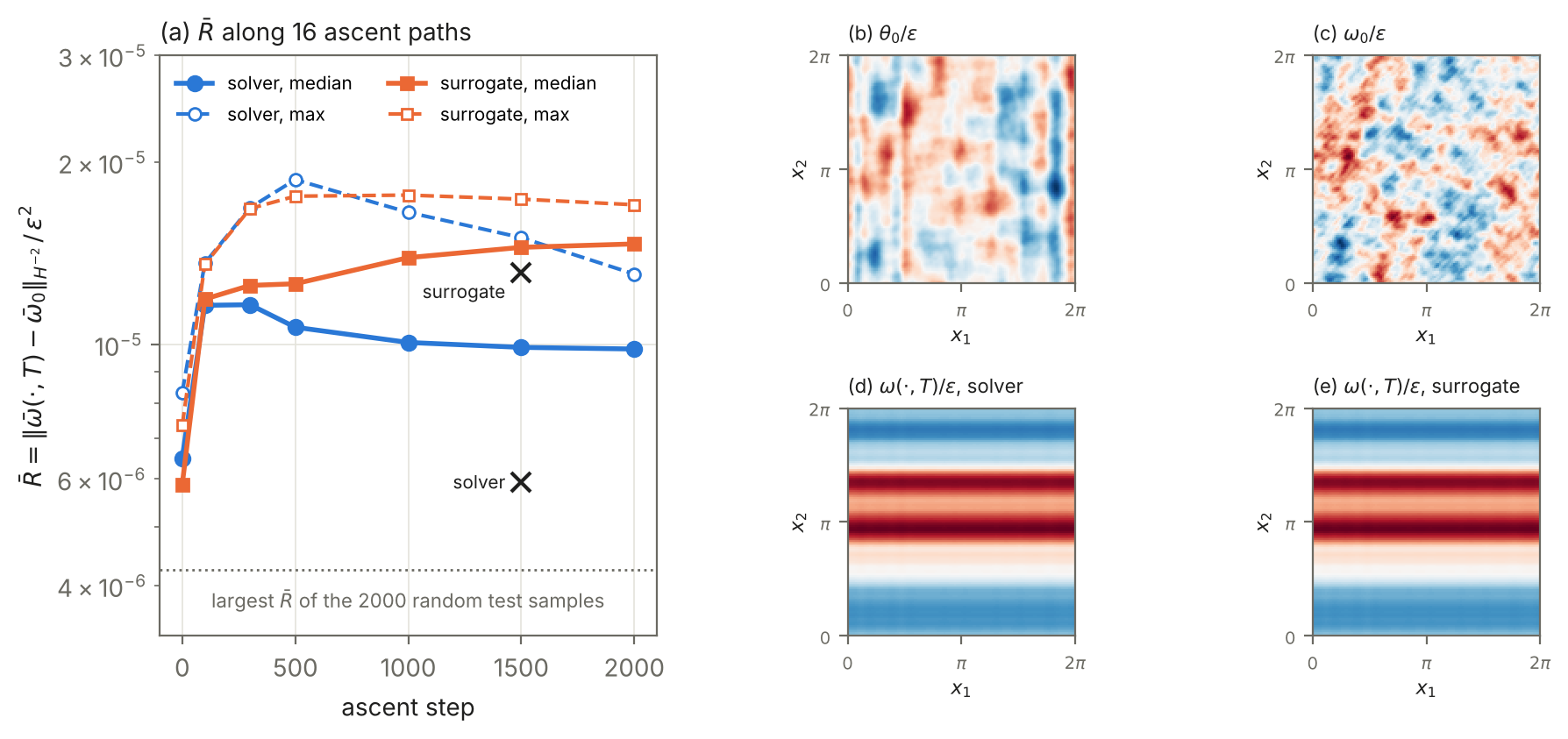}
\caption{Gradient ascent of the surrogate's $\bar R^{\rm NO}$ with the sum of
$|\hat\omega_0(k)|^2$ and of $|\hat\theta_0(k)|^2$ over each wavenumber annulus
held fixed. (a) $\bar R$ at seven steps of $16$ ascent paths, from the spectral
solver (circles) and the surrogate (squares), median (solid line, filled markers)
and maximum (dashed line, open markers) over the paths. The crosses mark the case
of (b)--(e), the largest relative difference between the surrogate and the
solver. The dotted line is the largest $\bar R$ of the $2\times10^3$ test samples.
(b) Initial temperature $\theta_0/\varepsilon$ and (c) initial vorticity
$\omega_0/\varepsilon$ of that case, at step $1500$ of its path and
$\varepsilon=0.1$. (d), (e) Its vorticity $\omega(\cdot,T)/\varepsilon$ at $T=100$
from the solver and from the surrogate, with one color scale.}
\label{fig:search}
\end{figure}

\section{Discussion and conclusions}
\label{sec_discussion}

In this paper we designed a
neural operator from a rigorous analysis of the long-time behavior of the
two-dimensional anisotropic Boussinesq system on the periodic box $\mathbb T^2$,
with horizontal viscosity $\nu$ and vertical thermal diffusivity $\eta$. The
operator maps a small perturbation of hydrostatic equilibrium to the vorticity at
a late time, and it captures the main features of the analysis. It reproduces,
sample by sample, the $H^{-2}$ norm of the change of the horizontally averaged
vorticity, the quantity that the analysis bounds, with a median error of $2.2$ percent and at most $12$ percent
over $2\times10^3$ test runs, together with its growth as $\varepsilon^2$, and
the oscillation of the vorticity about its horizontal average, of order
$\varepsilon$. A single network, trained on $10^4$ solver runs at ten amplitudes
spanning a factor of $23$, serves all amplitudes, including five not used in
training. It also gives a quantitative lower bound for
the constant in the estimate, which the analysis does not provide.

The long-time analysis guides the design of the network. First, the network does not learn the
final vorticity whole. Its prediction is the initial averaged vorticity
$\bar\omega_0$ plus a learned approximation of
the change $\omega(\cdot,T)-\bar\omega_0$, split into the change of the
horizontal average and the remaining oscillation. The loss measures the relative error of each part separately and gives the
change of the average ten times the weight of the oscillation. Second, the inputs
are the initial fields divided by $\varepsilon$, and each part of the output is
divided by its size at that amplitude, of order $\varepsilon^2$ for the change of
the average and $\varepsilon$ for the oscillation. The network outputs are therefore of order one at every
amplitude. Third,
since the temperature decays to zero, the network predicts only the vorticity at
the final time $T=100$.

Departing from the theory leads to failure. When the final temperature was
included in the training targets in a preliminary experiment, the training did not converge. When the same
network is trained with the standard loss, the error of the whole final vorticity
between the network and the solver, it learns the training fields well but
misses the quantity bounded by the analysis by a factor of about $30$ in the
median, and the change of the horizontal average that it predicts grows only about
linearly in $\varepsilon$, instead of with the second-order growth that the
analysis proves.

Because the network evaluates an initial datum in milliseconds, against
$10.2$\,s for the solver, and gives the gradient of its prediction with respect
to the initial data, it can search the space of initial data far more
extensively than the solver. Screening $5\times10^6$ random initial data took
$83$ minutes with the network and would take about $1.6$ years with the
solver. A gradient ascent of its prediction, with every candidate checked
with the solver, finds initial data whose change of the horizontal average is
$4.4$ times the largest among the $2\times10^3$ test runs, and gives the lower
bound $C\ge1.87\times10^{-5}$ for the constant in the estimate
$\|\bar\omega(\cdot,t)-\bar\omega_0\|_{H^{-2}}\le C\varepsilon^2$.

The approach extends to other equations for which an analysis identifies a
nearly conserved quantity and gives the order in the amplitude of each part of
the solution. Such knowledge lets a neural
operator learn a small but important quantity accurately, and the trained
operator in turn estimates quantities, such as the constants in the estimates,
that the analysis does not provide.

\section*{Acknowledgments}

J. Wu was partially supported by the National Science Foundation (NSF) under
grants DMS-2104682 and DMS-2309748. X. Zheng was partially supported by the NSF
under grant DMS-2309747. This work was supported in part by computational
resources and services provided by the High Performance Computing Center (HPCC) of the Institute for Cyber-Enabled
Research at Michigan State University.

\appendix

\section{The uniform stability estimate}
\label{app:stability}

We record here the global stability estimate on $\mathbb T^2$. It is Theorem~1.1 of
\cite{GaoWuZheng2026}, which uses the norms \eqref{norm_def} and the
smallness condition $(\|u_0\|_{H^2}^2+\|\theta_0\|_{H^2}^2)^{1/2}\le\varepsilon$,
implied by \eqref{data_hyp}.

\begin{thm}[Uniform $H^2$ stability on $\mathbb T^2$, \cite{GaoWuZheng2026}]
\label{thm:stability}
There is $\varepsilon_*=\varepsilon_*(\nu,\eta)>0$ with the following property.
Let $0<\varepsilon\le\varepsilon_*$ and let the initial data
$(u_0,\theta_0)\in H^2(\mathbb T^2)$ be divergence free and satisfy
\eqref{data_hyp}. Then \eqref{per001}--\eqref{per003} possesses a global
solution, unique in the class
\begin{equation}
\begin{gathered}
(u,\theta)\in L^\infty\big(0,\infty;H^2(\mathbb T^2)\big)
\cap\bigcap_{0\le s<2}C\big([0,\infty);H^s(\mathbb T^2)\big),\\
\partial_t(u,\theta)\in L^\infty\big(0,\infty;L^2(\mathbb T^2)\big),
\end{gathered}
\label{sol_class}
\end{equation}
and there are an absolute constant $C_*$ and a constant $C_\nu$ depending only on
$\nu$ such that
\begin{align}
&\sup_{t\ge0}\big(\|u(t)\|_{H^2}+\|\theta(t)\|_{H^2}\big)\le C_*\,\varepsilon,
\label{V0}\\
&\int_0^{\infty}\big(\nu\|\partial_1u\|_{H^2}^2
+\eta\|\partial_2\theta\|_{H^2}^2\big)\,d\tau\le C_\nu\,\varepsilon^2,
\label{V1}\\
&\int_0^{\infty}\|\partial_1Q_0\theta\|_{L^2}^2\,d\tau\le C_\nu\,\varepsilon^2 .
\label{V2}
\end{align}
One may take $C_*=8\sqrt2$ and $C_\nu=64(2+\nu)/\min\{1,\nu^2\}$. The thermal
diffusivity $\eta$ enters only through the smallness threshold $\varepsilon_*$.
\end{thm}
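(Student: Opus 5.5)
The plan is a bootstrap (continuity) argument built on an $H^2$ energy functional augmented by a cross term that exposes the hidden damping created by the buoyancy coupling. Local well-posedness in the class \eqref{sol_class} is standard for $H^2$ data, so the whole task is to prove the a priori bounds \eqref{V0}--\eqref{V2} under the bootstrap hypothesis $\sup_{[0,t]}(\|u\|_{H^2}+\|\theta\|_{H^2})\le 2C_*\varepsilon$.

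\textbf{Step 1: the basic energy identity.} Apply the Fourier multiplier $(1-\Delta)$ to \eqref{per001}--\eqref{per002} and pair with $(1-\Delta)u$ and $(1-\Delta)\theta$. The coupling terms $\theta\mathbf e_2$ and $u_2$ cancel at every Sobolev level, because both are multipliers, and the pressure drops out by $\nabla\cdot u=0$. This gives
\[
\tfrac{d}{dt}\big(\|u\|_{H^2}^2+\|\theta\|_{H^2}^2\big)+2\nu\|\partial_1u\|_{H^2}^2+2\eta\|\partial_2\theta\|_{H^2}^2=N_u+N_\theta ,
\]
where $N_u$ and $N_\theta$ are the trilinear commutator terms $-\langle\partial^\alpha(u\cdot\nabla u)-u\cdot\nabla\partial^\alpha u,\partial^\alpha u\rangle$, $|\alpha|\le2$, and the analogous terms for $\theta$.

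The dissipation misses two components. The first is the stratified velocity $\bar u$. Here Lemma~\ref{lem:meanosc}(d) and the preserved means give $\bar u_2=0$, so only $\bar u_1(x_2)$ is undamped. The second is the vertical average $Q_0\theta$ of the temperature.

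\textbf{Step 2: damping of $Q_0\theta$ through the coupling.} Apply $Q_0$ to the $u_2$-equation. The term $Q_0\partial_2p$ vanishes by periodicity in $x_2$, so
\[
\partial_tQ_0u_2=\nu\partial_{11}Q_0u_2+Q_0\theta-Q_0(u\cdot\nabla u_2),\qquad
\partial_tQ_0\theta=-Q_0u_2-Q_0(u\cdot\nabla\theta).
\]
Differentiating the cross term $-\langle\partial_1Q_0u_2,\partial_1Q_0\theta\rangle$ in time produces $-\|\partial_1Q_0\theta\|_{L^2}^2+\|\partial_1Q_0u_2\|_{L^2}^2$, plus a viscous term and nonlinear terms. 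The viscous term is absorbed by Young's inequality into $\nu^2\|\partial_1 u\|_{H^2}^2$. The term $\|\partial_1Q_0u_2\|_{L^2}^2$ is bounded by $\|\partial_1u\|_{H^2}^2$.

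We then add a small multiple, of order $\min\{1,\nu^2\}$, of this cross term to the energy. The cross term is bounded by the energy, so the augmented functional stays equivalent to the energy. Integrating in time gives \eqref{V2} together with \eqref{V1}, provided the nonlinear terms are bounded by $C\varepsilon\,\mathcal D$, where $\mathcal D=\nu\|\partial_1u\|_{H^2}^2+\eta\|\partial_2\theta\|_{H^2}^2+\|\partial_1Q_0\theta\|_{L^2}^2$. The explicit constants $C_*=8\sqrt2$ and $C_\nu=64(2+\nu)/\min\{1,\nu^2\}$ come from this weighting.

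\textbf{Step 3: the nonlinear terms (main obstacle).} We need $|N_u|+|N_\theta|+|\text{cross nonlinear}|\le C\|(u,\theta)\|_{H^2}\,\mathcal D$. This is where the strip argument of \cite{Adhikari2022} fails: on $\mathbb T$ the one-dimensional Sobolev bound $\|f\|_{L^\infty_{x_j}}\lesssim\|f\|_{L^2}^{1/2}(\|f\|_{L^2}+\|\partial_jf\|_{L^2})^{1/2}$ has a zeroth-order term that is not dissipated.

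The plan is to split every factor into its mean and oscillation, $g=\bar g+\widetilde g$ for velocities and $\theta=Q_0\theta+\theta^\perp$ for temperature, before applying anisotropic H\"older and Sobolev. Each triple product then has at least one factor that is either $\widetilde u$, controlled by $\partial_1u$ through Lemma~\ref{lem:meanosc}(e), or $\theta^\perp$, controlled by $\partial_2\theta$ through Lemma~\ref{lem:meanosc}(f). The zeroth-order Sobolev terms then fall on these dissipated factors, and the Poincar\'e inequalities close the estimate.

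The genuinely dangerous terms are those built only from undamped pieces:
\begin{itemize}
\item[(i)] $\bar u_1\bar u_1\bar u_1$-type terms in $N_u$. These vanish, since $\bar u_1\partial_1(\cdot)$ integrates to zero and $\bar u_2=0$.
\item[(ii)] $\bar u_1\,\partial_1Q_0\theta\,Q_0\theta$ in $N_\theta$. This is also a transport term and cancels by integration by parts at the top order. Its commutator pieces such as $\partial_2^2\bar u_1\,\partial_1\theta\,\partial_2^2\theta$ always carry a $\partial_1\theta$ or a $\partial_2\theta$, which are controlled by $\|\partial_1Q_0\theta\|_{L^2}$ and $\eta\|\partial_2\theta\|_{H^2}^2$.
\end{itemize}
I expect the bookkeeping of (ii), together with the $Q_0$-projected nonlinear terms in the cross functional (where $Q_0(u\cdot\nabla\theta)$ must be paired with $\partial_1^2Q_0u_2$), to be the hardest part. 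I would first try to write each such term so that a $\partial_1$ lands on a temperature factor.

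\textbf{Step 4: closing the bootstrap.} Once the bound of Step 3 holds, the augmented energy satisfies $\tfrac{d}{dt}\mathcal E+(1-C\varepsilon)\mathcal D\le0$. For $\varepsilon\le\varepsilon_*(\nu,\eta)$ this gives $\mathcal E(t)\le\mathcal E(0)$, which improves the bootstrap hypothesis and yields \eqref{V0}--\eqref{V2}. Uniqueness in the class \eqref{sol_class} follows from an $L^2$ energy estimate for the difference of two solutions, using the $H^2$ bound \eqref{V0} on both.
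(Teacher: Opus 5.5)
\paragraph{Status of the result.} The paper does not prove this theorem. It quotes it from \cite{GaoWuZheng2026}, and its only hints at the mechanism are the extra dissipation \eqref{V2} and the remark that the zeroth-order term of the one-dimensional Sobolev inequality on a bounded interval breaks the strip argument. There is no in-paper proof to compare with, so your proposal has to stand on its own.

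\paragraph{What is consistent.} Your architecture fits those hints: the $H^2$ energy identity, the cross term $-\langle\partial_1Q_0u_2,\partial_1Q_0\theta\rangle$ (which gives exactly the $L^2$-level damping \eqref{V2}), the splittings $g=\bar g+\widetilde g$ and $\theta=Q_0\theta+\theta^\perp$, and a bootstrap. One slip in Step~2: absorbing $\tfrac12\delta\nu^2\|\partial_1^3Q_0u_2\|_{L^2}^2$ into $2\nu\|\partial_1u\|_{H^2}^2$ requires $\delta\lesssim\nu^{-1}$ when $\nu>1$. So the weight cannot be of order $\min\{1,\nu^2\}$ there, which is consistent with the factor $2+\nu$ in $C_\nu$. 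The genuine gap is in Step~3, which you rightly call the main obstacle.

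\paragraph{The gap in Step~3.} The principle you state does not suffice. Since $\mathcal D$ is quadratic, the bound $|N|\le C\|(u,\theta)\|_{H^2}\mathcal D$ needs \emph{two} dissipated factors in every trilinear term. With only one, a term is bounded by $C\varepsilon^2\mathcal D^{1/2}$, whose time integral is not controlled by $\int_0^\infty\mathcal D\,dt$. So the dangerous terms are all those with at most one dissipated factor, not only those built entirely from undamped pieces.

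Moreover, $Q_0\theta$ is dissipated only through $\|\partial_1Q_0\theta\|_{L^2}$, and this is sharp. By Proposition~\ref{prop:linear}(b) the mode $(k_1,0)$ decays at the rate $|\lambda_+|\approx(\nu k_1^2)^{-1}$, so already for the linearization $\int_0^\infty\|\partial_1^2Q_0\theta\|_{L^2}^2\,dt$ is not bounded by the $H^2$ norm of the data. A factor $\partial_1^2Q_0\theta$ is therefore undamped, although it ``carries a $\partial_1\theta$'' in the sense of your item (ii). The following single-dissipated-factor classes need ideas that are missing from the proposal.

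\textbf{The class $\widetilde u\,Q_0\theta\,Q_0\theta$ in $N_\theta$.} The $\alpha=\partial_1^2$ commutator contains $2\int_{\mathbb T^2}\partial_1u_1\,(\partial_1^2Q_0\theta)^2\,dx$, which is a priori only $O(\varepsilon^2\|\partial_1u\|_{H^2})$. It vanishes because $\partial_1u_1=-\partial_2u_2$ integrates to zero in $x_2$ against a function of $x_1$ alone. Equivalently, $Q_0u_1\equiv0$ by incompressibility and $\int_{\mathbb T^2}u_1\,dx=0$. This cancellation disposes of the whole class, since the $u_2$-terms carry a $\partial_2$ that annihilates $Q_0\theta$.

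\textbf{The class $\bar u_1\,Q_0\theta\,\theta^\perp$ in $N_\theta$.} It contains $\int_{\mathbb T^2}\partial_2\bar u_1\,\partial_1^2Q_0\theta\,\partial_1\partial_2\theta\,dx$, in which only $\theta^\perp$ is dissipated. Integrate by parts in $x_1$ so that exactly one derivative stays on $Q_0\theta$. This gives $-\int_{\mathbb T^2}\partial_2\bar u_1\,\partial_1Q_0\theta\,\partial_1^2\partial_2\theta\,dx$, which is at most $C\|u\|_{H^2}\|\partial_1Q_0\theta\|_{L^2}\|\partial_2\theta\|_{H^2}$, because $f(x_2)g(x_1)$ has $L^2(\mathbb T^2)$ norm $\|f\|_{L^2(0,2\pi)}\|g\|_{L^2(0,2\pi)}$.

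\textbf{The class $\bar u\,\bar u\,\widetilde u$ in $N_u$.} It also has one dissipated factor. It must be shown to vanish by the $x_1$-averaging (with $\bar u_2=0$) that you invoke only for $\bar u\,\bar u\,\bar u$.

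With these cancellations and integrations by parts the estimate appears to close, but Step~3 as written does not prove it.

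\paragraph{Smaller points.} At exactly $H^2$ in two dimensions $\nabla u\notin L^\infty$. So neither local well-posedness nor your $L^2$ uniqueness estimate for the difference is standard in the Kato sense. Both must use anisotropic inequalities together with the partial dissipation $\nu\|\partial_1 w\|_{L^2}^2$, $\eta\|\partial_2\phi\|_{L^2}^2$ of the difference $(w,\phi)$. Finally, the values of $C_*$ and $C_\nu$ are asserted rather than derived.
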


\begin{rmk}[The estimate used in the proof]
\label{rmk:unifbound}
Theorem~\ref{thm:stability} yields the following estimate, on which the proof of
Theorem~\ref{thm:main} in Appendix~\ref{app:thm} draws. In addition, the proof of
part~(b) uses \eqref{V1} directly, so that its constant depends on $\nu$ only.
There is $C_0=C_0(\nu,\eta)$ such that
\begin{multline}
\sup_{t\ge0}\Big(\|u(t)\|_{H^2}^2+\|\theta(t)\|_{H^2}^2\Big)
+2\nu\!\int_0^{\infty}\!\!\|\partial_1u\|_{H^2}^2\,d\tau
+2\eta\!\int_0^{\infty}\!\!\|\partial_2\theta\|_{H^2}^2\,d\tau\\
+\int_0^{\infty}\!\!\|\partial_1\theta\|_{L^2}^2\,d\tau
\le C_0\,\varepsilon^2 .
\label{unif_bound}
\end{multline}
The first three terms are \eqref{V0} and \eqref{V1}. Only the last one requires a proof. Decompose $\partial_1\theta
=\partial_1Q_0\theta+\partial_1\theta^{\perp}$, which is orthogonal in
$L^2(\mathbb T^2)$ by Lemma~\ref{lem:meanosc}(g). For every $x_1$ the function
$x_2\mapsto\partial_1\theta^{\perp}(x_1,x_2)$ has zero mean on $[0,2\pi]$, so the
vertical Poincar\'e inequality \eqref{poincare_y} applies and gives, using
$\partial_2Q_0\theta=0$,
\[
\|\partial_1\theta^{\perp}\|_{L^2}
\le\|\partial_2\partial_1\theta^{\perp}\|_{L^2}
=\|\partial_1\partial_2\theta\|_{L^2}
\le\|\partial_2\theta\|_{H^1}.
\]
Therefore
$\int_0^{\infty}\|\partial_1\theta^{\perp}\|_{L^2}^2\,d\tau
\le\eta^{-1}\int_0^{\infty}\eta\|\partial_2\theta\|_{H^2}^2\,d\tau
\le\eta^{-1}C_\nu\varepsilon^2$,
and adding \eqref{V2} bounds the last term of \eqref{unif_bound}. Collecting the
four contributions, one may take $C_0=C_*^2+(3+\eta^{-1})C_\nu$, which depends on
$\nu$ and $\eta$ only.
\end{rmk}

\section{Proof of Theorem~\ref{thm:main}}
\label{app:thm}

We first record Barbalat's lemma \cite{Barbalat1959,Khalil2002}, which turns the
finite dissipation integrals of \eqref{unif_bound} into convergence to zero.

\begin{lem}[Barbalat]
\label{lem:barbalat}
Let $g:[0,\infty)\to[0,\infty)$ satisfy $\int_0^\infty g(\tau)\,d\tau<\infty$ and
be uniformly continuous, for instance Lipschitz. Then $g(t)\to0$ as $t\to\infty$.
\end{lem}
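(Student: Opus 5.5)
The plan is to prove Barbalat's lemma by contradiction, using uniform continuity to turn each point where $g$ is not small into a small interval on which $g$ has a definite amount of integral. Suppose $g(t)\not\to0$. Then there are $\delta>0$ and a sequence $t_n\to\infty$ with $g(t_n)\ge 2\delta$. Passing to a subsequence, we may assume $t_{n+1}\ge t_n+1$.

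The first step uses uniform continuity. For the $\delta$ above, choose $r\in(0,\tfrac12)$ such that $|g(t)-g(s)|\le\delta$ whenever $|t-s|\le r$. If $g$ is Lipschitz with constant $L$, one can take explicitly $r=\min\{\tfrac12,\delta/L\}$. Then on each interval $I_n=[t_n,t_n+r]$ we have $g\ge\delta$. We work on the right side of $t_n$ so that the interval lies in $[0,\infty)$ even when $t_n$ is near $0$.

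The intervals $I_n$ are pairwise disjoint by the spacing $t_{n+1}\ge t_n+1>t_n+r$. Since $g\ge0$, for every $N$
\[
\int_0^\infty g(\tau)\,d\tau\ \ge\ \sum_{n=1}^{N}\int_{I_n}g(\tau)\,d\tau\ \ge\ N r\delta .
\]
Letting $N\to\infty$ contradicts $\int_0^\infty g<\infty$. Hence $g(t)\to0$.

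No step is a real obstacle. The only point needing care is that $r$ depends on $\delta$ alone and not on $n$, which is exactly what uniform continuity, as opposed to mere continuity, provides. Without that uniformity, $g$ could be made of narrowing spikes of height one and finite total area.
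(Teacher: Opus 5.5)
Your proof is correct. The paper does not prove the lemma; it cites Barbalat and Khalil, and your argument is the standard one given there. Uniform continuity yields a width $r$, independent of $n$, such that $g\ge\delta$ on $[t_n,t_n+r]$, and this contradicts the finiteness of $\int_0^\infty g$.

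The only difference is in how the contradiction is drawn. Khalil's version contradicts the Cauchy criterion for the convergent improper integral on a single interval $[t_n,t_n+r]$. That needs neither $g\ge0$ nor disjoint intervals, so it also covers sign-changing $g$. You instead use $g\ge0$ to sum over disjoint intervals, which is equally valid for the statement as given.

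A trivial slip: $r=\min\{\tfrac12,\delta/L\}$ can equal $\tfrac12$, which lies outside the open interval $(0,\tfrac12)$ you specified. This is harmless, since disjointness only needs $r<1$.
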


\subsection{Proof of part (a)}
\label{app:thm_a}

\begin{proof}[Proof of Theorem~\ref{thm:main}(a)]
Global existence, uniqueness and the bound \eqref{unif_bound} are furnished by
Theorem~\ref{thm:stability} and Remark~\ref{rmk:unifbound} of
Appendix~\ref{app:stability}. We repeatedly use
that, by \eqref{unif_bound},
$\sup_{t\ge0}(\|u(t)\|_{H^2}+\|\theta(t)\|_{H^2})\le C\varepsilon$. Throughout
the proof of part~(a), $C$ denotes a constant that may depend on the viscosity
$\nu$ and the diffusivity $\eta$ but on nothing else, and that may change from one
occurrence to the next. In the proof of part~(b) the dependence of the constant
is tracked explicitly.

Write $v$ for either $u$ or $\theta$, and let $D$ denote a first-order spatial
derivative, so that $D\in\{\partial_1,\partial_2\}$. By \eqref{sol_class} the
solution supplied by Theorem~\ref{thm:stability} has $\partial_tv$ only in
$L^2$, hence $\partial_tDv=D\partial_tv$ is only known to lie in $H^{-1}$, and
the chain rule $\frac{d}{dt}\|Dv\|_{L^2}^2=2(\partial_tDv,Dv)_{L^2}$ has no
meaning as an $L^2$ inner product. It is supplied instead by the following standard fact
\cite[Ch.~III, Lemma~1.2]{Temam1984}, in which the pairing is the duality
between $H^{-1}$ and $H^1$. If
$w\in L^2_{\rm loc}([0,\infty);H^1(\mathbb T^2))$ and
$\partial_tw\in L^2_{\rm loc}([0,\infty);H^{-1}(\mathbb T^2))$, then
$t\mapsto\|w(t)\|_{L^2}^2$ is absolutely continuous and
\begin{equation}
\frac{d}{dt}\|w\|_{L^2}^2
=2\big\langle\partial_tw,\,w\big\rangle_{H^{-1},H^1}
\qquad\text{for almost every }t .
\label{ac_identity}
\end{equation}
Every function $g$ to which Lemma~\ref{lem:barbalat} is applied below is of the
form $g=\|w\|_{L^2}^2$ with $w=Dv$. By \eqref{sol_class}, $w\in L^\infty(0,\infty;H^1)$ and
$\partial_tw=D\partial_tv\in L^\infty(0,\infty;H^{-1})$, so \eqref{ac_identity}
applies and, moving $D$ onto the second factor,
$|dg/dt|=2\big|\int_{\mathbb T^2}\partial_tv\cdot D^2v\,dx\big|
\le2\|\partial_tv\|_{L^2}\|D^2v\|_{L^2}$, which is bounded by \eqref{sol_class}
and \eqref{unif_bound}. Hence each such $g$ is Lipschitz on $[0,\infty)$. 

\medskip
\textbf{Step 1: Preservation of the zero means.}
Write $m_\theta=\int_{\mathbb T^2}\theta$ and $m_u=\int_{\mathbb T^2}u_2$. Both are
Lipschitz on $[0,\infty)$, because $\partial_t(u,\theta)\in L^\infty(0,\infty;L^2)$
by \eqref{sol_class} and $\mathbb T^2$ has finite measure, and by the identities of
Sec.~\ref{sec_hyp} they satisfy
$m_\theta'=-m_u$ and $m_u'=m_\theta$ for almost every $t$. Hence
$m_\theta^2+m_u^2$ is absolutely continuous with
$(m_\theta^2+m_u^2)'=-2m_\theta m_u+2m_um_\theta=0$ almost everywhere, so it is
constant. It vanishes at $t=0$, and therefore $m_\theta\equiv m_u\equiv0$. Since $u$ is divergence-free and
$\int_{\mathbb T^2}u_2=0$, Lemma~\ref{lem:meanosc}(d) gives $\bar u_2\equiv0$ and $u_2=\widetilde u_2$.

\medskip
\textbf{Step 2: Decay of the oscillatory part and of $u_2$.}
We first show $\|\widetilde u(t)\|_{H^1}\to0$. Set $g_1(t)=\|\partial_1u(t)\|_{L^2}^2$.
By \eqref{unif_bound}, $\int_0^\infty g_1\,d\tau\le\int_0^\infty\|\partial_1u\|_{H^2}^2\,d\tau<\infty$,
and $g_1$ is Lipschitz, as shown above. Lemma~\ref{lem:barbalat} gives $g_1(t)\to0$, that is, $\|\partial_1u(t)\|_{L^2}\to0$.
By the horizontal Poincar\'e inequality \eqref{poincare_x}
(Lemma~\ref{lem:meanosc}(e)),
$\|\widetilde u\|_{L^2}\le\|\partial_1u\|_{L^2}\to0$, and interpolation with the
uniform bound yields
\[
\|\widetilde u(t)\|_{H^1}\le C\|\widetilde u\|_{L^2}^{1/2}\|\widetilde u\|_{H^2}^{1/2}
\le C\varepsilon^{1/2}\|\widetilde u\|_{L^2}^{1/2}\to0 .
\]
Since $u_2=\widetilde u_2$ by Step~1,
$\|u_2(t)\|_{H^1}\le\|\widetilde u(t)\|_{H^1}\to0$.

\medskip
\textbf{Step 3: Decay of the temperature.}
Applying Lemma~\ref{lem:barbalat} to the Lipschitz functions $g_2(t)=\|\partial_1\theta\|_{L^2}^2$
and $g_3(t)=\|\partial_2\theta\|_{L^2}^2$, whose time integrals are finite by
\eqref{unif_bound}, gives $\|\partial_1\theta(t)\|_{L^2}\to0$ and
$\|\partial_2\theta(t)\|_{L^2}\to0$. By Step~1, $\int_{\mathbb T^2}\theta(t)=0$, so the Fourier
coefficient of $\theta(t)$ at $k=0$ vanishes, and since $|k|\ge1$ for every other
wavenumber, the Poincar\'e inequality on $\mathbb T^2$ gives
\[
\|\theta(t)\|_{L^2}\le\|\nabla\theta(t)\|_{L^2}\to0 ,
\]
hence $\|\theta(t)\|_{H^1}\to0$.

\medskip
\textbf{Step 4: Decay in $H^s$, $s<2$.}
For $0\le s\le2$ and every $f\in H^2(\mathbb T^2)$, H\"older's inequality applied
to the Fourier sum \eqref{norm_def} with the weight
$(1+|k|^2)^{s}=\big((1+|k|^2)^{2}\big)^{s/2}$ gives
$\|f\|_{H^s}\le\|f\|_{L^2}^{1-s/2}\,\|f\|_{H^{2}}^{s/2}$. By \eqref{unif_bound} the
$H^2$ norms of $u_2(t)$ and $\theta(t)$ are bounded by $C\varepsilon$ for all
$t$, and by Steps~2 and~3 their $L^2$ norms tend to zero. Hence
$u_2(t)\to0$ and $\theta(t)\to0$ in $H^s$ for every $0\le s<2$, and for $s<0$
the claim follows from the case $s=0$.
\end{proof}

\subsection{Proof of part (b)}
\label{app:cor}

\begin{proof}[Proof of Theorem~\ref{thm:main}(b)]
\textbf{Step 1: The vorticity becomes stratified.} By Step~2 of the proof of
part~(a), $\|\widetilde u(t)\|_{H^1}\to0$. The oscillatory part of the
vorticity is $\widetilde\omega=\omega-\bar\omega=\partial_1\widetilde u_2-\partial_2\widetilde u_1$
(Lemma~\ref{lem:meanosc}(b)), so
$\|\omega(t)-\bar\omega(\cdot,t)\|_{L^2}=\|\widetilde\omega(t)\|_{L^2}\le C\|\widetilde u(t)\|_{H^1}\to0$.

\medskip
\textbf{Step 2: The horizontal average converges.} From \eqref{mean_eq}, for $0\le t_1\le t_2$,
\begin{multline*}
\|\bar\omega(\cdot,t_2)-\bar\omega(\cdot,t_1)\|_{H^{-2}}
\le\int_{t_1}^{t_2}\|\partial_{22}\overline{\widetilde u_1\widetilde u_2}\|_{H^{-2}}\,dt\\
\le\int_{t_1}^{t_2}\|\widetilde u_1\widetilde u_2\|_{L^2}\,dt
\le C\int_{t_1}^{t_2}\|\widetilde u(t)\|_{H^1}^2\,dt,
\end{multline*}
where we used $\|\partial_{22}g\|_{H^{-2}}\le\|g\|_{L^2}$, $\|\overline g\|_{L^2}\le\|g\|_{L^2}$
(Lemma~\ref{lem:meanosc}(c)),
and the Ladyzhenskaya inequality $\|\widetilde u_1\widetilde u_2\|_{L^2}\le C\|\widetilde u\|_{H^1}^2$.
By Lemma~\ref{lem:meanosc}(e), $\|\widetilde u\|_{H^1}\le\sqrt2\,\|\partial_1u\|_{H^1}$, and with \eqref{V1} this gives
\[
\int_0^\infty\|\widetilde u\|_{H^1}^2\,dt\le 2\int_0^\infty\|\partial_1u\|_{H^2}^2\,dt
\le2\nu^{-1}C_\nu\varepsilon^2 .
\]
Therefore the right-hand side of the preceding estimate tends to $0$ as
$t_1\to\infty$, and $\bar\omega(\cdot,t)$ is Cauchy in $H^{-2}$ and converges to
some $\bar\omega_\infty$. Integrating \eqref{mean_eq} from $0$ to $\infty$ gives
the representation \eqref{limit_rep}. The same estimate with $t_1=0$ and
$t_2=t$ gives $\|\bar\omega(\cdot,t)-\bar\omega_0\|_{H^{-2}}\le C\varepsilon^2$
for every $t\ge0$, and with $t_2=\infty$ the bound on
$\bar\omega_\infty-\bar\omega_0$. The
Ladyzhenskaya and Poincar\'e constants are absolute, so the constant depends only
on $\nu$.

\medskip
\textbf{Step 3: The whole field converges to the same limit.} By
Step~1, $\omega(\cdot,t)-\bar\omega(\cdot,t)\to0$ in
$L^2(\mathbb T^2)$, hence in $H^{-2}(\mathbb T^2)$, the embedding
$L^2\hookrightarrow H^{-2}$ being continuous. Adding the convergence of Step~2
gives $\omega(\cdot,t)\to\bar\omega_\infty$ in $H^{-2}$, and by continuity of the
norm $\|\omega(\cdot,t)-\bar\omega_0\|_{H^{-2}}\to\|\bar\omega_\infty-\bar\omega_0\|_{H^{-2}}$,
which Step~2 bounds by $C\varepsilon^2$.

\medskip
\textbf{Step 4: Regularity of the limit and convergence in $H^s$, $s<1$.} Since
$|\hat\omega_k|\le|k|\,|\hat u_k|$ and $(1+|k|^2)|k|^2\le(1+|k|^2)^2$,
\eqref{unif_bound} gives $\|\omega(t)\|_{H^1}\le C\varepsilon$ for all $t$. Each
Fourier coefficient of $\omega(t)$ converges to that of $\bar\omega_\infty$ by
Step~3, so Fatou's lemma gives $\|\bar\omega_\infty\|_{H^1}\le C\varepsilon$, and
$\bar\omega_\infty\in H^1(\mathbb T^2)$. For $-2\le s\le1$, H\"older's inequality
applied to the Fourier sum with
$(1+|k|^2)^s=\big((1+|k|^2)^{-2}\big)^{(1-s)/3}\big(1+|k|^2\big)^{(s+2)/3}$ gives
\[
\|f\|_{H^s}\le\|f\|_{H^{-2}}^{(1-s)/3}\,\|f\|_{H^1}^{(s+2)/3}.
\]
With $f=\omega(t)-\bar\omega_\infty$, which is bounded in $H^1$ by $C\varepsilon$
and tends to zero in $H^{-2}$ by Step~3, this gives
$\omega(\cdot,t)\to\bar\omega_\infty$ in $H^s$ for every $-2\le s<1$. For $s<-2$
the $H^s$ norm is bounded by the $H^{-2}$ norm, and the convergence follows from
the case $s=-2$.
\end{proof}

\section{Proof of Proposition~\ref{prop:linear}}
\label{app:linear}

\begin{proof}
Throughout, $k\neq0$. The Biot--Savart law
gives $\hat u_2=-ik_1\hat\omega/|k|^2$. Multiplying the
Fourier transform of the first equation of \eqref{lin_sys} by $-ik_1/|k|^2$ gives
the first equation of \eqref{lin_mode}, and the second follows directly.

(a) Let $k_1\neq0$, $k_2\neq0$ and
$E_k=|\hat u_2|^2+(k_1^2/|k|^2)|\hat\theta|^2$. By \eqref{lin_mode},
\[
\frac{dE_k}{dt}=-2\nu k_1^2|\hat u_2|^2-2\eta k_2^2\frac{k_1^2}{|k|^2}|\hat\theta|^2
\le-2\min\{\nu k_1^2,\eta k_2^2\}\,E_k .
\]
Since $k_1^2\ge1$ and $k_2^2\ge1$, the minimum is at least $\min\{\nu,\eta\}$, and
the bound follows by Gr\"onwall's inequality. If $k_1=0$ and $k_2\neq0$, then
$\hat u_2=-ik_1\hat\omega/|k|^2=0$ by the Biot--Savart law, and the second
equation of \eqref{lin_mode} reduces to
$d\hat\theta/dt=-\eta k_2^2\hat\theta$.

(b) For $k_2=0$ and $k_1\neq0$ the matrix of \eqref{lin_mode} is
$\bigl(\begin{smallmatrix}-\nu k_1^2&1\\-1&0\end{smallmatrix}\bigr)$, with characteristic equation
$\lambda^2+\nu k_1^2\lambda+1=0$. For $\nu k_1^2>2$ its roots
$\lambda_\pm=\big(-\nu k_1^2\pm\sqrt{\nu^2k_1^4-4}\big)/2$ are real, distinct and
negative, which gives the representation of $\hat\theta$. Multiplying the
numerator and the denominator of $\lambda_+$ by $\nu k_1^2+\sqrt{\nu^2k_1^4-4}$
gives the stated formula, and $0\le\sqrt{\nu^2k_1^4-4}\le\nu k_1^2$ gives the
bounds on $|\lambda_+|$ and $\lambda_-\le-\nu k_1^2/2$.
\end{proof}

\bibliographystyle{siamplain}
\bibliography{refer030}

\end{document}